\begin{document}

\title[Dual Linear Programming Bounds for Sphere Packing]{Dual Linear Programming Bounds for Sphere Packing via Discrete Reductions}
\author[Rupert Li]{Rupert Li}
\date{\today}

\maketitle

\begin{abstract}
The Cohn-Elkies linear program for sphere packing, which was used to solve the 8 and 24 dimensional cases, is conjectured to not be sharp in any other dimension $d>2$.
By mapping feasible points of this infinite-dimensional linear program into a finite-dimensional problem via discrete reduction, we provide a general method to obtain dual bounds on the Cohn-Elkies linear program.
This reduces the number of variables to be finite, enabling computer optimization techniques to be applied.
Using this method, we prove that the Cohn-Elkies bound cannot come close to the best packing densities known in dimensions $3 \leq d \leq 13$ except for the solved case $d=8$.
In particular, our dual bounds show the Cohn-Elkies bound is unable to solve the 3, 4, and 5 dimensional sphere packing problems.
\end{abstract}

\section{Introduction}
The sphere packing problem is that of finding the densest packing of congruent balls in $\R^d$, i.e., the packing that covers the largest proportion of $\R^d$.
The case $d=1$ is trivial (consecutive line segments cover all of $\R$), and despite the optimal packings in 2 and 3 dimensions being intuitively natural, proving no packing can do better is quite challenging.
Thue \cite{thue1892om} proved the case $d=2$ in 1892, but the case $d=3$ was only proven in 2005 by Hales \cite{hales2005proof} with a computer-assisted proof that was formally verified in 2017 by a team of 22 people \cite{hales2017formal}.
These ad hoc proofs use the explicit structure of packings in $\R^2$ and $\R^3$, and do not seem to easily extend to higher dimensions.

A general upper bound technique for sphere packing was developed in 2003 by Cohn and Elkies \cite{cohn2003new}.
The Cohn-Elkies linear program can be viewed as the continuous analog of the Delsarte linear program in coding theory, as developed by Delsarte \cite{delsarte1972bounds} in 1972.
This is no coincidence: by viewing the sphere packing problem as that of picking the densest set of points (the centers of the balls) such that no pair of points are within a fixed distance from each other (twice the radius), we find that the sphere packing problem and the central problem of coding theory---picking codewords that are each at least some fixed distance apart---are analogous problems, just in different metric spaces.
The Cohn-Elkies linear programming bound yields the best upper bounds known for the sphere packing density in high dimensions \cite{sardari2020new}.
Feasible points of this infinite-dimensional linear program are functions $f:\R^d\to\R$; this yields a continuum of variables to optimize over, making it extremely difficult to find an optimal solution.
However, near-optimal points can be obtained by polynomials times a Gaussian for proper decaying, and these approximations suggest that the linear programming bound is sharp in $d=8$ and 24.

In a 2017 breakthrough, Viazovska \cite{viazovska2017sphere} found an optimal function for $d=8$ that matched the density of the $E_8$ root lattice, solving the 8-dimensional sphere packing problem.
Almost immediately afterwards, Cohn, Kumar, Miller, Radchenko, and Viazovska \cite{cohn2017sphere} similarly found an optimal function for $d=24$, solving the sphere packing problem for $d=24$.
To this day, these are the only two cases beyond $d=3$ that have been solved.

The Cohn-Elkies linear programming bound is conjectured to not be sharp in any other dimension $d>2$.
As the linear programming name suggests, the Cohn-Elkies linear program has a dual linear program, so proving non-sharpness, i.e., bounding below the Cohn-Elkies linear programming bound above the sphere packing density, amounts to finding a feasible solution to the dual linear program with objective value higher than the sphere packing density in the corresponding dimension.
The lack of a duality gap for this infinite-dimensional linear program is more subtle than for the finite-dimensional case, which follows from classical strong duality; the lack of a duality gap for the Cohn-Elkies linear program was conjectured by Cohn \cite{cohn2002new} and proven by Cohn, de Laat, and Salmon \cite{cohn2022three}.
Unlike the primal, however, it is much harder to obtain near-optimal feasible points of the dual.
This intractability stems from the fact that the dual amounts to optimizing over a space of measures, where it is believed that optimal measures will be supported on a discrete set of radii and thus are singular.
There is no known simple family of measures, analogous to the polynomials times Gaussians in the primal linear program, that yield good approximations to the dual linear program.
Cohn and Triantafillou \cite{cohn2022dual} optimize over spaces of modular forms to find feasible points of the dual linear program, yielding dual bounds for $d\in\{12,16,20,28,32\}$ that prove the Cohn-Elkies linear programming bound cannot come near the best known packing densities in these dimensions.
For dimensions $3 \leq d \leq 16$ except for the solved case $d=8$, upper bounds better than those from the Cohn-Elkies linear program have been proven, with the best known upper bounds coming from the recent work of Cohn, de Laat, and Salmon \cite{cohn2022three}.
In particular, the dual bounds for $d=12$ and 16 are greater than the known upper bound on the respective sphere packing density, proving non-sharpness of the Cohn-Elkies linear programming bound: it cannot equal the sphere packing density, and therefore cannot prove the 12 or 16-dimensional sphere packing problem.
However, for $d > 16$ we do not have better upper bounds than the Cohn-Elkies linear program, so $d=12$ and 16 are the only dimensions for which non-sharpness of the Cohn-Elkies linear programming bound is previously known.
De Courcy-Ireland, Dostert, and Viazovska \cite{de2022dual} are currently working to prove non-sharpness for $d=6$ using this modular form method.

We present a new method to create dual bounds for the Cohn-Elkies linear program.
The core of this method is a discrete reduction given by a composition of restrictions to map feasible points of the Cohn-Elkies linear program into a finite-dimensional problem, whose dual linear program is tractable to approximate.
Specifically, a feasible point of the Cohn-Elkies linear program is a function $f:\R^d\to\R$ satisfying certain constraints.
We restrict $f$ to $g:\Z^d\to\R$, and then restrict the resulting Fourier transform $\widehat g$ to $\widehat g_m: \Z_m^d\to\R$.
Due to the nature of modular forms, the method of Cohn and Triantafillou, which requires modular forms of weight $d/2$, is simplest when $d$ is even, since odd dimensions require modular forms of half-integral weight; their paper \cite{cohn2022dual} dealt only with $d$ a multiple of 4.
Our discrete reduction, however, is impartial to the modular congruences of $d$, yielding a more general method to create dual bounds, albeit a method which is limited by computational power as $d$ increases.
Using this method, we create dual bounds that prove the Cohn-Elkies linear programming bound is not sharp in dimensions $d=3$, 4, and 5, expanding the set of dimensions for which this is known to $\{3,4,5,12,16\}$.
For $6 \leq d \leq 13$ except for the sharp case $d=8$, we have the weaker result that the Cohn-Elkies linear programming bound is greater than the best known sphere packing densities in these dimensions.

Even when the Cohn-Elkies linear programming bound is not sharp, determining its optimal value is still of interest in its own right.
It can be interpreted as describing a sign uncertainty principle for a function and its Fourier transform, similar in form to the mathematical statement underlying the Heisenberg uncertainty principle; see Cohn and Gon\c{c}alves \cite{cohn2019optimal}.

In \cref{section: preliminaries}, we provide preliminary background on sphere packing and the Cohn-Elkies linear program.
We introduce our discrete reduction and the resulting discrete Cohn-Elkies linear program in \cref{section: Discrete LP}.
Our central result, \cref{theorem: discrete reduction}, demonstrates our dual bound method by proving that the discrete Cohn-Elkies linear programming bound is a lower bound of the (continuous) Cohn-Elkies linear programming bound.
In \cref{section: discrete dual LP}, we explicitly construct the discrete dual linear program and discuss how a feasible point of this dual creates a dual bound for the Cohn-Elkies linear programming bound via a summation formula.
We describe the algorithm used to find and verify discrete dual solutions in \cref{section: constructing dual bounds}.
Applying this algorithm, which relies on \cref{theorem: discrete reduction}, we prove in \cref{section: d = 3,section: d = 4 and 5} that the Cohn-Elkies linear programming bound is not sharp in dimensions 3, 4, and 5.
In \cref{section: beat construction}, we similarly find dual solutions that prove the Cohn-Elkies linear programming bound is greater than the best known sphere packing densities in dimensions 6 through 12, inclusive, except 8.
In \cref{section: general dual bound odd dimension}, we provide a general dual solution for odd dimensions $d \geq 5$, extending the results from the previous section to $d=13$.
We conclude in \cref{section: conclusion} with some computational observations and open questions.

\section{Sphere packing and the Cohn-Elkies linear program}\label{section: preliminaries}
We first introduce basic terminology regarding sphere packing, as well as the Cohn-Elkies linear programming bound.
Denoting the open ball of radius $r$ centered at $x\in\R^d$ as $B_r^d(x)$, a \emph{sphere packing} $\mc{P}$ in $\R^d$ is a disjoint union $\bigcup_{x\in C} B_r^d(x)$ of balls of a fixed radius $r$.
The \emph{upper density} $\Delta_{\mc{P}}$ of $\mc{P}$ is
\[ \Delta_{\mc{P}} = \limsup_{R\to\infty} \frac{\vol(B_R^d(0)\cap\mc{P})}{\vol(B_R^d(0))} = \limsup_{R\to\infty} \frac{\vol(B_R^d\cap\mc{P})}{\vol(B_R^d)}, \]
where $B_r^d$ denotes the radius $r$ ball in $\R^d$ centered at 0.
Standardizing $r=1$, if we wish to view the sphere packing problem as one of picking center-points rather than balls, the analogous quantity is the \emph{upper center density}
\[ \delta_\mc{P} = \limsup_{R\to\infty} \frac{|B_R^d\cap C|}{\vol(B_R^d)} = \frac{\Delta_\mc{P}}{\vol(B_1^d)}. \]
The \emph{sphere packing density} in $\R^d$ is then simply the highest density,
\[ \Delta_d = \sup_{\mc{P}} \Delta_\mc{P}, \]
the supremum over all sphere packings $\mc{P}\subset\R^d$.
We analogously define the \emph{sphere packing center density} $\delta_d = \frac{\Delta_d}{\vol(B_1^d)}$, which will typically have more elegant values.
While the sphere packing density $\Delta_d$ is invariant under scaling $\mc{P}$ and its corresponding $r$, the sphere packing center density $\delta_d$ is not, hence the normalization to $r=1$; the nominal value of $r$ does not change the essential nature of the sphere packing problem, and so when discussing the sphere packing center density we will always standardize $r=1$.

We normalize the \emph{Fourier transform} $\widehat f$ of $f:\R^d\to\R$ by
\[ \widehat f(y) = \int_{\R^d} f(x) e^{-2\pi i\left<x,y\right>}dx, \]
where $\left<x,y\right>=x_1y_1+\cdots+x_dy_d$ is the standard inner product on $\R^d$.
With these definitions, we can now formally state the Cohn-Elkies linear programming bound.
\begin{theorem}[Cohn and Elkies \protect{\cite{cohn2003new}}]\label{theorem: Cohn Elkies}
Let $f:\R^d\to\R$ be a continuous, integrable\footnote{
Cohn and Elkies \cite{cohn2003new} imposed stronger hypotheses on $f$, but using the approach from \cite[\S 4]{cohn2002new}, which shows smooth and rapidly decreasing functions can create bounds arbitrarily close to any bound from \cref{theorem: Cohn Elkies}, these hypotheses can be obviated.
This mollification of the hypotheses was first observed by Cohn and Kumar \cite{cohn2007universally}. }
function such that $\widehat f$ is also real-valued and integrable (i.e., $f$ is even).
Suppose the following constraints hold for some $r>0$:
\begin{enumerate}[label=(\arabic*)]
    \item $f(0),\widehat f(0)>0$,
    \item $f(x) \leq 0$ for $|x|\geq r$, and
    \item $\widehat f(y) \geq 0$ for all $y$.
\end{enumerate}
Then the sphere packing center density in $\R^d$ is at most
\[ \left(\frac{r}{2}\right)^d \frac{f(0)}{\widehat f(0)}. \]
\end{theorem}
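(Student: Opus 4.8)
The plan is to reproduce the classical Poisson summation argument of Cohn and Elkies, after making two standard reductions. First, an \emph{analytic reduction}: by the observation recorded in the footnote (cf. \cite[\S4]{cohn2002new}), any bound coming from a merely continuous and integrable $f$ satisfying the three constraints can be approximated arbitrarily closely by bounds coming from Schwartz functions (smooth and rapidly decreasing) satisfying the same constraints with a slightly enlarged value of $r$; hence it suffices to prove the theorem under the assumption that $f$ is Schwartz, which makes all sums and integrals below absolutely convergent. Second, a \emph{geometric reduction}: periodic packings --- finite unions of translates of a full-rank lattice --- have center densities approaching $\delta_d$, so it suffices to bound the center density of an arbitrary periodic packing.

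So let the center set be $C = \bigcup_{j=1}^{N}(v_j+\Lambda)$, where $\Lambda\subset\R^d$ is a full-rank lattice with covolume $\vol(\R^d/\Lambda)$ and dual lattice $\Lambda^*$, and assume all pairwise distances in $C$ are at least $r$ (so that balls of radius $r/2$ about the points of $C$ form a packing). Rescaling the radius to $1$ multiplies the point density by $(r/2)^d$, so the center density of this packing equals $(r/2)^d\, N/\vol(\R^d/\Lambda)$, and it suffices to prove $N/\vol(\R^d/\Lambda) \le f(0)/\widehat f(0)$. The key identity is the Poisson summation formula applied to $\Lambda$ (legitimate since $f$ is Schwartz):
\[
\sum_{\lambda\in\Lambda}\,\sum_{j,k=1}^{N} f(v_j-v_k+\lambda) = \frac{1}{\vol(\R^d/\Lambda)} \sum_{\mu\in\Lambda^*} \widehat f(\mu)\, \Bigl|\sum_{j=1}^{N} e^{2\pi i\langle \mu,v_j\rangle}\Bigr|^2.
\]

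Now I would bound the two sides in opposite directions. On the left, the terms with $\lambda=0$ and $j=k$ contribute exactly $N f(0)$, while every other term is a value $f(v_j-v_k+\lambda)$ at a difference of two \emph{distinct} points of $C$, hence at a point of norm at least $r$, so constraint (2) forces that term to be $\le 0$; thus the left side is at most $N f(0)$. On the right, the $\mu=0$ term contributes $\widehat f(0)\,N^2/\vol(\R^d/\Lambda)$, and by constraint (3) together with the nonnegativity of $|\cdot|^2$ every remaining term is $\ge 0$; thus the right side is at least $\widehat f(0)\,N^2/\vol(\R^d/\Lambda)$. Combining these two inequalities and dividing by $N\widehat f(0)>0$ (using constraint (1)) yields $N/\vol(\R^d/\Lambda)\le f(0)/\widehat f(0)$. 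Taking the supremum over periodic packings and invoking their density among all packings finishes the proof.

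The step I expect to demand the most care is the analytic reduction: once $f$ is Schwartz the displayed identity and the interchange of summations are immediate from absolute convergence, and the remaining computation is the short pair of inequalities above, so the real content is justifying the passage from a general continuous integrable $f$ to a Schwartz one (and, on the geometric side, confirming that periodic packings suffice). Both of these are standard and can be cited from \cite{cohn2002new}, as the footnote indicates, rather than reproved here.
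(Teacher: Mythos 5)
Your proposal is correct and is essentially the standard Cohn--Elkies argument: reduce to periodic packings and Schwartz functions, apply Poisson summation over the lattice $\Lambda$ to the sum over translate pairs, and bound the two sides in opposite directions using constraints (2) and (3). The paper itself does not reprove this theorem but cites \cite{cohn2003new}, whose proof is exactly this one, so there is nothing to flag.
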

This naturally yields an optimization problem to obtain the best upper bound.
Before we state this optimization problem, we will first make some simplifying assumptions about our feasible set of functions $f$.
As noted in the footnote, Cohn \cite{cohn2002new} showed that we may assume $f$ is smooth and rapidly decreasing without affecting the best bounds that can be obtained from \cref{theorem: Cohn Elkies}, where $f:\R^d\to\R$ is \emph{rapidly decreasing} if $f(x)=O\left(\frac{1}{(1+|x|)^k}\right)$ for all positive integers $k$, i.e., $f$ has faster-than-polynomial decay.
Notice that the constraints and objective value are invariant under scaling $f$, i.e., using $cf$ for $c>0$.
As a result, we can assume $\widehat f(0) \geq 1$ and remove the $\widehat f(0)$ term from the objective function (where any optimal $f$ will have $\widehat f(0) = 1$).
Furthermore, we may radially symmetrize $f$ by averaging over all of its rotations, so that $f(x)$ depends only on $|x|$, without changing the validity of the constraints nor the objective value.
Thus without loss of generality we may assume $f$ is radial.
This yields the following optimization problem.
\begin{problem}\label{problem: Cohn Elkies}
For a fixed $r>0$ and positive integer $d$, the \emph{Cohn-Elkies linear program} in $\R^d$ is
\begin{align*}
    \text{minimize} \quad & \left(\frac{r}{2}\right)^d f(0) \\
    \text{over all} \quad & \text{radial, smooth, rapidly decreasing functions } f:\R^d\to\R \\
    \text{such that} \quad & f(0) > 0 \\
    & \widehat f(0) \geq 1 \\
    & f(x) \leq 0 \quad \text{for all } |x| \geq r \\
    & \widehat f(y) \geq 0 \quad \text{for all } y.
\end{align*}
\end{problem}
The optimal objective value, i.e., the infimum of $\left(\frac{r}{2}\right)^d f(0)$, is the \emph{Cohn-Elkies linear programming bound} in $\R^d$.
By \cref{theorem: Cohn Elkies}, it is an upper bound on the sphere packing center density.
We call a feasible point $f$ an \emph{auxiliary function} on $\R^d$.

One may note that the Cohn-Elkies linear programming bound does not depend on $r$; in particular, by replacing $f$ with $x\mapsto c^d f(cx)$ for any $c>0$, whose Fourier transform is $y\mapsto\widehat f(y/c)$, and $r$ with $r/c$, we obtain a new function with the same objective value.
Hence we can without loss of generality assume $r=1$.
However, in our discrete reduction, the value of $r$ will matter, so we leave \cref{problem: Cohn Elkies} as is.

\section{The discrete Cohn-Elkies linear program}\label{section: Discrete LP}
We now introduce our discrete reduction.
For positive integer $m$, let $\Z_m$ denote $\Z/m\Z$, where we use coset representatives $x$ such that $-m/2 < x \leq m/2$.
Using these coset representatives, we may embed $\Z_m^d$ into $\Z^d\subset\R^d$, and thus inherit a metric from the Euclidean distance in $\R^d$.
We continue to use $|x|$ for $x\in\Z_m^d$ to denote the Euclidean distance from $x$ to 0, as well as $\left<x,y\right>$ to denote $x_1y_1+\cdots+x_dy_d$. 
Normalizing the \emph{discrete Fourier transform} $\widehat f$ of $f:\Z_m^d\to\R$ by
\[ \widehat f(y) = m^{-d/2} \sum_{x\in\Z_m^d} f(x) e^{-\frac{2\pi i}{m}\left<x,y\right>}, \]
we can define an analog of \cref{problem: Cohn Elkies} over functions on $\Z_m^d$.
\begin{problem}\label{problem: discrete Cohn Elkies}
For a fixed $r>0$ and positive integers $m$ and $d$, the \emph{discrete Cohn-Elkies linear program} in $\Z_m^d$ is
\begin{align*}
    \text{minimize} \quad & \left(\frac{r}{2}\right)^d f(0) \\
    \text{over all} \quad & \text{even functions } f:\Z_m^d\to\R \\
    \text{such that} \quad & \widehat f(0) \geq m^{-d/2} \\
    & f(x) \leq 0 \quad \text{for all } |x| \geq r \\
    & \widehat f(y) \geq 0 \quad \text{for all } y.
\end{align*}
\end{problem}
We analogously define the \emph{discrete Cohn-Elkies linear programming bound} in $\Z_m^d$ and \emph{auxiliary functions} on $\Z_m^d$.

Note that when $m=2$, this linear program coincides with the linear program for binary codes, i.e., the Delsarte linear program, where a Euclidean distance of $\sqrt{k}$ on $\{0,1\}^d$ is identified with a Hamming distance of $k$ on $\F_2^d$.
When $m>2$, however, our metric $|x|$ is unusual from the perspective of coding theory, e.g., it is neither the Hamming nor the Lee metric.
The discrete Cohn-Elkies linear program is still a linear programming bound for packing with the Euclidean metric in discrete space, and thus while it may be an unusual metric from a coding theory perspective, it is the natural metric from a discrete geometry perspective.

There are some slight differences between \cref{problem: Cohn Elkies,problem: discrete Cohn Elkies} that we now discuss.
Firstly, we could radially symmetrize $f:\R^d\to\R$.
However, over $\Z_m^d$ we may only average over the orbit of the symmetric group $S_d$ and changing the sign of the indices, i.e., average over $f(s\sigma x)$ for $\sigma\in S_d$ which permutes the indices of $x=(x_1,\dots,x_d)\in\Z_m^d$ and $s\in\Z_2^d\cong\{\pm1\}^d$ which changes the sign of the indices using coset representatives $-m/2<x\leq m/2$.
Let $G_d = \Z_2^d \rtimes S_d$, so that $G_d$ acts on $\Z_m^d$.
Thus we may assume $f$ is $G_d$-invariant, but cannot assume it is radial; this also implies $\widehat f$ is $G_d$-invariant.
For simplicity, we omit $G_d$-invariance from \cref{problem: discrete Cohn Elkies}, though we must still assume $f$ is even, so that $\widehat f$ is real-valued.
As $\Z_m^d$ is finite, we no longer have any concerns about continuity or decay, and thus do not need any analog of being smooth or rapidly decreasing.
Due to our normalization of the discrete Fourier transform, which includes the constant factor of $m^{-d/2}$ so that the discrete Fourier transform is its own inverse for even functions, the proper analog of $\widehat f(0) \geq 1$ in $\R^d$ is $\widehat f(0) \geq m^{-d/2}$ in $\Z_m^d$; we will see later that this yields the correct relative scaling of the objective functions of \cref{problem: Cohn Elkies,problem: discrete Cohn Elkies}.
Finally, as $\widehat f(0) \geq m^{-d/2} > 0$ and $\widehat f(y) \geq 0$ for all $y$, we always have
\[ f(0) = m^{-d/2} \sum_{y\in\Z_m^d} \widehat f(y) > 0, \]
so the corresponding constraint is no longer necessary.

A priori, there is no reason to believe \cref{problem: Cohn Elkies,problem: discrete Cohn Elkies} are related in any way beyond sharing the same form.
However, the following result, which provides a discrete reduction from an auxiliary function on $\R^d$ to an auxiliary function on $\Z_m^d$, reveals that these two problems are indeed related.

\begin{theorem}\label{theorem: discrete reduction}
For any $r>0$ and positive integers $d$ and $m\geq 2r$, if there exists an auxiliary function $f$ on $\R^d$ with objective value $\left(\frac{r}{2}\right)^d f(0)$, then there exists an auxiliary function $g_m$ on $\Z_m^d$ with the same $r$ and objective value $\left(\frac{r}{2}\right)^d g_m(0)$, where $g_m(0) \leq f(0)$.
\end{theorem}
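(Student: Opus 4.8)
The plan is to build $g_m$ from $f$ by a two-step discretization: first periodize $f$ over the lattice $m\Z^d$ to get a function on the torus $\R^d/m\Z^d$, equivalently a function on $\Z_m^d$ after restricting to lattice points, and then check that the three Cohn--Elkies constraints survive. Concretely, I would define
\[
  g_m(x) = \sum_{v\in m\Z^d} f(x+v) \qquad\text{for } x\in\Z_m^d,
\]
using the coset representatives with $-m/2 < x_i \le m/2$. Because $f$ is rapidly decreasing the sum converges absolutely, and since $f$ is even so is $g_m$. The key structural fact is the Poisson-type relationship between this periodization and sampling of $\widehat f$: the discrete Fourier transform of $g_m$ should be a positive multiple of $\widehat f$ sampled on the dual lattice $\frac1m\Z^d$, i.e. $\widehat{g_m}(y) = m^{d/2}\,\widehat f(y/m)$ for $y\in\Z_m^d$ (the exact constant dictated by the $m^{-d/2}$ normalization in \cref{problem: discrete Cohn Elkies}). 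Verifying this identity — expanding the definition of $\widehat{g_m}$, swapping the finite sum over $\Z_m^d$ with the lattice sum, and recognizing the result as a Riemann sum / Poisson summation statement — is the technical heart of the argument, but it is a standard computation once the normalizations are pinned down.

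Granting that identity, the three constraints follow quickly. Constraint (3), $\widehat{g_m}(y)\ge 0$, is immediate from $\widehat f\ge 0$. The normalization constraint $\widehat{g_m}(0)\ge m^{-d/2}$ — wait, that is not what the identity gives directly; rather $\widehat{g_m}(0) = m^{d/2}\widehat f(0) \ge m^{d/2} \ge m^{-d/2}$, so this is more than enough. For constraint (2), $g_m(x)\le 0$ when $|x|\ge r$: here is where the hypothesis $m\ge 2r$ is used. If $x\in\Z_m^d$ has $|x|\ge r$ (with $x$ in the standard fundamental domain), then for every nonzero $v\in m\Z^d$ the point $x+v$ has $|x+v|\ge |v| - |x|_\infty$-type bound; more carefully, coordinatewise $|x_i+v_i|\ge m/2 \ge r$ whenever $v_i\ne 0$, and $|x_i+v_i| = |x_i|$ when $v_i=0$, so $|x+v|\ge |x|\ge r$ for all $v\in m\Z^d$ simultaneously — including $v=0$. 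Hence every term $f(x+v)$ in the defining sum for $g_m(x)$ is $\le 0$, so $g_m(x)\le 0$. (One should double-check the edge behavior at $|x_i| = m/2$, where two coset representatives coincide, but evenness of $f$ handles this.)

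Finally, the objective value. We have $g_m(0) = \sum_{v\in m\Z^d} f(v) = f(0) + \sum_{v\ne 0} f(v)$, and each $v\ne 0$ in $m\Z^d$ satisfies $|v|\ge m \ge 2r \ge r$, so $f(v)\le 0$ by constraint (2) for $f$; therefore $g_m(0)\le f(0)$, giving the claimed bound on the objective value. I expect the main obstacle to be bookkeeping around the normalizations and the coset-representative conventions: getting the constant in $\widehat{g_m}(y) = m^{d/2}\widehat f(y/m)$ exactly right so that the $m^{-d/2}$ threshold in \cref{problem: discrete Cohn Elkies} matches, and carefully justifying the interchange of summation (absolute convergence from rapid decay of $f$ makes this rigorous, but it must be stated). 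The geometric constraint-checking and the objective bound are then essentially immediate consequences of $m\ge 2r$.
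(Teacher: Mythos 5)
Your construction is the same as the paper's: the paper first restricts $f$ to $\Z^d$ and then periodizes over $m\Z^d$, and the composite of those two steps is exactly your one-step definition $g_m(x)=\sum_{v\in m\Z^d}f(x+v)$. The constraint checks for $g_m(x)\le 0$ when $|x|\ge r$ (using $m\ge 2r$ coordinatewise) and for $g_m(0)\le f(0)$ also match the paper. However, the identity you call the technical heart of the argument is stated incorrectly, and as written it is false. The discrete Fourier transform of the periodization is not a single sample of $\widehat f$; by Poisson summation it is the $\Z^d$-periodization of $\widehat f$, namely
\[
\widehat{g_m}(y)\;=\;m^{-d/2}\sum_{w\in\Z^d}f(w)\,e^{-2\pi i\langle w,\,y/m\rangle}\;=\;m^{-d/2}\sum_{n\in\Z^d}\widehat f\!\left(\frac{y}{m}+n\right),
\]
not $m^{d/2}\,\widehat f(y/m)$. (Take $d=1$, $m=2$, $f(x)=e^{-\pi x^2}$: the left side at $y=0$ is $2^{-1/2}\sum_n e^{-\pi n^2}\approx 0.768$, while your formula gives $\sqrt{2}$.) The factor-of-$m^d$ slack you noticed in checking $\widehat{g_m}(0)\ge m^{-d/2}$ --- ``more than enough'' --- was the symptom: with the correct identity, $\widehat{g_m}(0)=m^{-d/2}\sum_{n}\widehat f(n)\ge m^{-d/2}\widehat f(0)\ge m^{-d/2}$, which is essentially tight and is precisely why the threshold in the discrete problem is $m^{-d/2}$.

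The gap is repairable without changing anything else: both conclusions you draw from the identity ($\widehat{g_m}\ge 0$ and $\widehat{g_m}(0)\ge m^{-d/2}$) still follow from the corrected, periodized version, since every summand $\widehat f(y/m+n)$ is nonnegative and the $n=0$ term already meets the threshold at $y=0$. You also need, and implicitly use, that $\sum_{w\in\Z^d}|f(w)|<\infty$ and $\sum_{n\in\Z^d}|\widehat f(y+n)|<\infty$, both of which follow from rapid decay of $f$ and $\widehat f$; the paper handles this by introducing the intermediate function $g=f|_{\Z^d}$ on $\Z^d$ with $\widehat g(y)=\sum_{n\in\Z^d}\widehat f(y+n)$ on the torus, which is where the Poisson-summation content lives. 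So: same route as the paper, but the central identity must be corrected before the verification you defer would go through.
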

\begin{proof}
Consider an auxiliary function $f$ on $\R^d$ with objective value $\left(\frac{r}{2}\right)^d f(0)$.
We will obtain $g_m$ by composing two maps $f\mapsto g \mapsto g_m$, where the intermediate step yields $g:\Z^d\to\R$ which is essentially an auxiliary function on $\Z^d$, although we have not explicitly defined what this means.
Both maps are obtained by domain restriction: $g$ is the restriction of $f$ from $\R^d$ to $\Z^d$, and the second map is obtained by restriction on the Fourier transform side, where $\widehat g_m$ is the restriction of $\widehat g$ from $T^d$ to $\Z_m^d$, where $T=\R/\Z$.
Here, for a function $h:\Z^d\to\C$, its Fourier transform is $\widehat h:T^d\to\C$ given by
\[ \widehat h(y) = \sum_{x\in\Z^d} h(x) e^{-2\pi i\left<x,y\right>}. \]
As $\Z^d$ and $T^d$ are Pontryagin duals, Fourier inversion also gives
\[ h(x) = \int_{T^d} \widehat h(y) e^{2\pi i\left<x,y\right>} dy; \]
this is essentially a multivariate Fourier series.

In order to have $g=f|_{\Z^d}$ and relate its Fourier transform back to $\widehat f$, we first define $\widehat{g}:T^d\to\R$ by
\[ \widehat g(y) = \sum_{n \in \Z^d} \widehat f(y+n), \]
using $\left(-\frac{1}{2},\frac{1}{2}\right]$ as coset representatives for $T$.
As $f$ and thus $\widehat f$ are smooth and rapidly decreasing, $\widehat g$ exists, i.e., the sum converges for all $y\in T^d$.
This also implies the inverse Fourier transform $g$ of $\widehat g$ exists, given by
\[ g(x) = \int_{T^d} e^{2\pi i\left<x,y\right>}\widehat g(y)\, dy = \int_{\R^d} e^{2\pi i\left<x,y\right>}\widehat f(y)\, dy = f(x) \]
for all $x \in \Z^d$.
In other words, we obtain $g$ by simply restricting $f$ to $\Z^d$, i.e., $g=f|_{\Z^d}$.
Then $\widehat g(0) \geq \widehat f(0) \geq 1$ and $\widehat g(y) \geq 0$ for all $y$.
We have $g(0)=f(0)>0$ and $g(x) = f(x) \leq 0$ for all $|x|\geq r$, using the metric inherited from $\R^d\supset\Z^d$.
By definition of the Fourier transform on $\Z^d$,
\[ \widehat g(0) = \sum_{x\in\Z^d} g(x), \]
which is absolutely convergent, i.e., $g\in L^1(\Z^d)$, as $f$ is rapidly decreasing.

Next, we similarly define $g_m:\Z_m^d\to\R$ by
\[ g_m(x) = \sum_{n \in m\Z^d} g(x+n). \]
As $g \in L^1(\Z^d)$,
\[ \sum_{x \in \Z_m^d} |g_m(x)| \leq \sum_{x \in \Z^d} |g(x)|, \]
which is finite, so $g_m \in L^1(\Z_m^d)$.
Then the Fourier transform of $g_m$ is given by
\[ \widehat g_m(y) = m^{-d/2} \sum_{x\in\Z_m^d} e^{-\frac{2\pi i}{m}\left<x,y\right>} g_m(x) = m^{-d/2} \sum_{x \in \Z^d} e^{-2\pi i\left<x,y/m\right>} g(x) = m^{-d/2} \widehat g(y/m) \]
for all $y \in \Z_m^d$.
Thus,
\[ \widehat g_m(0) = m^{-d/2} \widehat g(0) \geq m^{-d/2}\widehat f(0) \geq m^{-d/2}, \]
and $\widehat g_m(y) \geq 0$ for all $y$.
As usual, we use coset representatives in $\left(-m/2,m/2\right]$ for $\Z_m$ with $|x|$ for $x\in\Z_m^d$ as embedded in $\R^d$.
As $m \geq 2r$ by hypothesis in the theorem statement, if $|x| \geq r$ in $\Z_m^d$ then $|x+n|\geq r$ in $\Z^d$ for all $n\in m\Z^d$.
This implies that for $|x|\geq r$ in $\Z_m^d$,
\[ g_m(x) = \sum_{n\in m\Z^d} g(x+n) \leq 0. \]
As $|n|\geq m > r$ for all $n \in m\Z^d\setminus\{0\}$,
\[ g_m(0) = \sum_{n \in m\Z^d} g(n) \leq g(0) = f(0). \]
Evenness is preserved from $f\mapsto g \mapsto g_m$.
Hence, $g_m$ is an auxiliary function on $\Z_m^d$ satisfying $g_m(0) \leq f(0)$, completing the proof.
\end{proof}
As any auxiliary function $f$ on $\R^d$ can be mapped to an auxiliary function $g_m$ on $\Z_m^d$ while weakly decreasing the objective value, we immediately have the following corollary.
\begin{corollary}\label{corollary: discrete reduction}
For any $r>0$ and positive integers $d$ and $m \geq 2r$, the Cohn-Elkies linear programming bound in $\R^d$ is greater than or equal to the discrete Cohn-Elkies linear programming bound in $\Z_m^d$.
\end{corollary}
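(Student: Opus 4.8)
The plan is to deduce this directly from \cref{theorem: discrete reduction} by comparing the two linear programs objective value by objective value. Fix $r > 0$ and positive integers $d$ and $m \geq 2r$. By definition, the Cohn-Elkies linear programming bound in $\R^d$ is $\inf_f \left(\frac{r}{2}\right)^d f(0)$, the infimum over all auxiliary functions $f$ on $\R^d$ (a quantity that, as remarked after \cref{problem: Cohn Elkies}, does not actually depend on $r$), while the discrete Cohn-Elkies linear programming bound in $\Z_m^d$ is $\inf_{g}\left(\frac{r}{2}\right)^d g(0)$, the infimum over all auxiliary functions $g$ on $\Z_m^d$ for this same $r$ and this $m$.

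First I would observe that each continuous objective value dominates a discrete one: given any auxiliary function $f$ on $\R^d$, \cref{theorem: discrete reduction} produces an auxiliary function $g_m$ on $\Z_m^d$ with $g_m(0) \leq f(0)$, and hence with objective value $\left(\frac{r}{2}\right)^d g_m(0) \leq \left(\frac{r}{2}\right)^d f(0)$. Since $g_m$ is feasible for \cref{problem: discrete Cohn Elkies}, this shows the discrete Cohn-Elkies linear programming bound is at most $\left(\frac{r}{2}\right)^d f(0)$ for every auxiliary function $f$ on $\R^d$. (In passing, this also shows the discrete feasible set is nonempty whenever the continuous one is, though that is not needed for the statement.)

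Taking the infimum over all such $f$ then yields that the discrete Cohn-Elkies linear programming bound in $\Z_m^d$ is at most the Cohn-Elkies linear programming bound in $\R^d$, which is exactly the assertion. The only mild subtlety is that the continuous bound need not be attained by any single auxiliary function; this is harmless, since the inequality ``discrete bound $\leq \left(\frac{r}{2}\right)^d f(0)$'' holds for \emph{every} feasible $f$ and therefore survives passing to the infimum. I do not anticipate any genuine obstacle here: all of the substance lives in \cref{theorem: discrete reduction}, and the corollary is merely the translation of that pointwise construction into a comparison of optimal values, using that the two objective functions carry the same prefactor $\left(\frac{r}{2}\right)^d$ and that $\widehat g_m(0) \geq m^{-d/2}$ ensures feasibility is genuinely preserved.
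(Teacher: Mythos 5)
Your argument is correct and is exactly the paper's reasoning: the paper deduces the corollary immediately from \cref{theorem: discrete reduction} by noting that every auxiliary function on $\R^d$ maps to one on $\Z_m^d$ with weakly smaller objective value, and then (implicitly) passes to the infimum. Your write-up just makes the infimum step and the feasibility check explicit; there is no gap and no difference in approach.
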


This provides a powerful method to create dual bounds on \cref{problem: Cohn Elkies}.
As \cref{problem: discrete Cohn Elkies} is a finite-dimensional linear program, i.e., it has finitely many variables, one for each value $f(x)$ for $x\in\Z_m^d$, linear programming duality immediately holds.
By weak duality, any feasible point of the dual of \cref{problem: discrete Cohn Elkies} provides a dual lower bound on the discrete Cohn-Elkies linear programming bound in $\Z_m^d$, which is a lower bound on the Cohn-Elkies linear programming bound in $\R^d$.
And by strong duality, these dual bounds on \cref{problem: discrete Cohn Elkies} are innately lossless, in the sense that because the duality gap is zero, the best dual bound equals the best primal objective, i.e., the discrete Cohn-Elkies linear programming bound.
Note that there is still a potentially nonzero gap between the discrete Cohn-Elkies linear programming bound and the Cohn-Elkies linear programming bound.

Intuitively, one should believe this gap goes to zero as $m,r\to\infty$.
Increasing $r$ is equivalent to stretching an auxiliary function $f$ on $\R^d$, so as $r\to\infty$ we see that the sampling $g=f|_{\Z^d}$ of $f$ at integer lattice points becomes increasingly fine, and in the limit ideally captures all information about $f$.
For the second step, $\widehat g_m$ samples $\widehat g$ at all points $\frac{y}{m}$ for $y\in\Z_m^d$, i.e., all points whose coordinates are multiples of $\frac{1}{m}$.
Then as $m\to\infty$, we see that the sampling $\widehat g_m$ of $\widehat g$ also becomes increasingly fine, and so in the limit ideally contains all information about $\widehat g$ or equivalently $g$.
Thus as $m,r\to\infty$, we expect $g_m$ to capture essentially all information about $f$.

Thus, for suitably large values of $m$ and $r$, we may find dual bounds on \cref{problem: discrete Cohn Elkies} that are fairly close to the Cohn-Elkies linear programming bound.
In particular, finding a dual bound above the sphere packing center density, for any value of $m$ and $r$, would imply the Cohn-Elkies linear programming bound is not sharp.
As the Cohn-Elkies linear program is not believed to be sharp in any dimension $d>2$ other than $d\in\{8,24\}$, there should be a gap between the sphere packing center density and the Cohn-Elkies linear programming bound.
If this is true, then in any of these dimensions, suitable values of $m$ and $r$ would yield a dual bound that proves the Cohn-Elkies linear programming bound is not sharp.

\section{The discrete dual Cohn-Elkies linear program}\label{section: discrete dual LP}
We now explicitly discuss how a feasible dual point of \cref{problem: discrete Cohn Elkies} creates a dual bound on \cref{problem: Cohn Elkies}.
For simplicity, in order to have nonnegativity constraints on all of our primal variables, we will consider \cref{problem: discrete Cohn Elkies} as a linear program over the variables $\widehat f(y)$ rather than $f(x)$.
Then we have constraints for all $|x|\geq r$, each of which yields a dual variable $\mu(x)$.
Letting $\widetilde\lambda$ (we will reserve $\lambda$ for a more important variable later) be the dual variable corresponding to the constraint $-\widehat f(0) \leq -m^{-d/2}$, the dual of \cref{problem: discrete Cohn Elkies} is the following linear program:
\begin{align*}
    \text{maximize} \quad & m^{-d/2}\widetilde\lambda \\
    \text{over all} \quad & \widetilde\lambda \geq 0, \mu(x) \geq 0 \text{ for each } |x| \geq r \\
    \text{such that} \quad & \sum_{|x|\geq r} \mu(x)f(x) - \widetilde\lambda\widehat f(0) + \left(\frac{r}{2}\right)^d f(0) \text{ has an expansion of the form } \sum_{y\in\Z_m^d} \lambda(y)\widehat f(y) \\
    & \text{ with } \lambda(y) \geq 0 \text{ for all } y.
\end{align*}
Defining $\mu(x) = 0$ for $0<|x|<r$ and $\mu(0)=\left(\frac{r}{2}\right)^d$, we have $\lambda$ is the Fourier transform of $\mu$.
Then
\begin{align}\label{eq: discrete dual summation formula}
    \sum_{x\in\Z_m^d} \mu(x) f(x) = \sum_{y \in \Z_m^d} \lambda(y) \widehat f(y)
\end{align}
is a summation formula that holds for any $f:\Z_m^d\to\R$, where $\mu(x),\lambda(y)\geq0$ for all $x,y$, and $\lambda(0) \geq \widetilde\lambda$.
Any optimal solution will have $\widetilde\lambda = \lambda(0)$, so we may restate the dual of \cref{problem: discrete Cohn Elkies} as follows.
\begin{problem}\label{problem: discrete dual Cohn Elkies}
For a fixed $r>0$ and positive integers $m$ and $d$, the \emph{discrete dual Cohn-Elkies linear program} in $\Z_m^d$ is
\begin{align*}
    \text{maximize} \quad & m^{-d/2}\lambda(0) \\
    \text{over all} \quad & \text{even functions }\mu,\lambda : \Z_m^d\to\R \\
    \text{such that} \quad & \lambda(y) \geq 0 \quad \text{for all } y \\
    & \lambda = \widehat\mu \\
    & \mu(x) \geq 0 \quad \text{for all } |x|\geq r \\
    & \mu(x) = 0 \quad \text{for all } 0 < |x| < r \\
    & \mu(0) = \left(\frac{r}{2}\right)^d.
\end{align*}
\end{problem}
We call a feasible point $\mu$, which uniquely determines $\lambda$, a \emph{dual auxiliary function} on $\Z_m^d$.
Notice that as we may assume an auxiliary function on $\Z_m^d$ is $G_d$-invariant in \cref{problem: discrete Cohn Elkies}, we may also assume $\mu$ and $\lambda$ are $G_d$-invariant in \cref{problem: discrete dual Cohn Elkies}.
Any feasible point of \cref{problem: discrete dual Cohn Elkies} yields a summation formula by \cref{eq: discrete dual summation formula}; using the notation of \cref{theorem: discrete reduction}, this implies that any auxiliary function $f$ on $\R^d$ has objective value $\left(\frac{r}{2}\right)^d f(0)$ bounded below by
\begin{align}
    \left(\frac{r}{2}\right)^d f(0)
    \geq \left(\frac{r}{2}\right)^d g_m(0)
    \geq \sum_{x\in\Z_m^d} \mu(x) g_m(x)
    = \sum_{y\in\Z_m^d} \lambda(y)\widehat g_m(y)
    \geq \lambda(0)\widehat g_m(0)
    \geq m^{-d/2} \lambda(0).
\end{align}
This provides a new method to create dual bounds on the Cohn-Elkies linear program, with the potential to prove that the Cohn-Elkies linear programming bound is not sharp.

\section{Constructing dual bounds}\label{section: constructing dual bounds}
We now describe the algorithm used to find and verify a dual auxiliary function $\mu$ on $\Z_m^d$.
If $r^2$ is not an integer, then increasing $r$ to $\sqrt{\ceil{r^2}}$ increases the objective function of the discrete Cohn-Elkies linear program without changing its constraints, as $|x|^2$ is always an integer.
To get the best dual bounds on the Cohn-Elkies linear program, we may thus assume $r^2$ is an integer where $1 \leq r \leq \frac{m}{2}$, so that \cref{theorem: discrete reduction} holds.
For any values $d$, $m$, and $r$, we can approximately solve for the discrete Cohn-Elkies linear programming bound via a linear programming solver.
Once suitable values of $d$, $m$, and $r$ (i.e., parameters that yield large optimal values) have been found, we can find a rational dual auxiliary function $\mu$ on $\Z_m^d$, i.e., $\mu(x)\in\Q$ for all $x\neq0$, although $\mu(0)=\left(\frac{r}{2}\right)^d$ may be constrained to be irrational.
This allows us to calculate $\lambda=\widehat\mu$ exactly, using exact rational arithmetic and symbolic expressions.
Using real interval arithmetic, we can verify by computer that $\lambda(y) \geq 0$ for all $y$.
Then $\lambda(0)\in\Q(r)$ where $r^2\in\N$, which yields an exact feasible objective value $m^{-d/2}\lambda(0)\in\Q(r,\sqrt{m})$ to \cref{problem: discrete dual Cohn Elkies}.
This is a dual lower bound to the Cohn-Elkies linear programming bound in $\R^d$, whose decimal expansion can be rounded down for convenience.

This verification process requires a rational dual auxiliary function $\mu$; however, the computer optimization techniques will only provide a floating point solution for $\mu$.
Rounding this approximate solution to rationals is a natural approach, but it has one obstacle that needs to be addressed.
In particular, by complementary slackness conditions we expect $\lambda(y)=0$ for various values of $y$; it is not guaranteed that rounding the approximate solution for $\mu$ to rationals will preserve nonnegativity of these $\lambda(y)$.
And rounding $\lambda$ to rationals is almost certainly a worse idea, as ensuring $\mu(x)=0$ for all $0<|x|<r$ and $\mu(0)=\left(\frac{r}{2}\right)^d$ is even harder.
However, constraining $\lambda(y)\geq\varepsilon$ for some fixed $\varepsilon>0$ is a simple yet surprisingly effective way to obviate this issue.
This stronger constraint would naturally decrease the optimal value, but in practice, a value like $\varepsilon=10^{-10}$ is a sufficiently large buffer to ensure rounding $\mu$ to rationals keeps $\lambda(y)\geq0$ for all $y$, without impacting the resulting dual bound significantly.

As noted previously, we may assume $\mu$ and $\lambda$ are $G_d$-invariant functions on $\Z_m^d$.
Then the values that $\mu$ and $\lambda$ take on for coset representatives of $\Z_m^d/G_d$ contain sufficient information to recover $\mu$ and $\lambda$.
Thus, for computational efficiency, it is useful to reduce auxiliary functions $f$ on $\Z_m^d$ as vectors indexed by $\Z_m^d/G_d$.
Let $\sim$ be the equivalence relation defined by the action of $G_d$, i.e., for $x,y\in\Z_m^d$, we have $x\sim y$ if $x = \sigma y$ for some $\sigma\in G_d$.
Then viewing $f$ as a vector indexed over $\Z_m^d/G_d$, we find $\widehat f = \mc{F} f$, where the $x,y$ entry of $\mc{F}$ for $x,y\in\Z_m^d/G_d$ is
\[ \mc{F}_{xy} = m^{-d/2} \sum_{z \sim y} e^{-\frac{2\pi i}{m}\left<x,z\right>}. \]
The matrix $\mc{F}$ represents the $G_d$-symmetrized discrete Fourier transform, and satisfies $\mc{F}^{-1}=\mc{F}$.
In order to write the summation formula, \cref{eq: discrete dual summation formula}, as a sum over $\Z_m^d/G_d$, i.e., in the form $\mu^T f = \lambda^T \widehat f$ where $\mu$ and $\lambda$ are vectors indexed over $\Z_m^d/G_d$, we must scale each entry $\mu_x$ and $\lambda_x$ by the size of the orbit of $x$.
In other words, for all $x\in\Z_m^d/G_d$,
\[ \mu_x = \mu(x)\cdot|\{y\in\Z_m^d|y\sim x\}| = \mu(x) \cdot |\Orb(x)|,\]
and similarly for $\lambda_x$, where we use $\Orb(x)=\{y\in\Z_m^d\,|\,y\sim x\}$ to denote the orbit of $x$ under $G_d$.
From now on, we will use the vector interpretation of $\mu$ and $\lambda$, i.e., with the rescaling by orbit size.
Then $\lambda = \mc{F}^T \mu$.
Notice that 0 is in its own orbit, so this rescaling does not affect $\mu_0$ or $\lambda_0$.
For clarity, we restate \cref{problem: discrete dual Cohn Elkies} using this vector notation.
\begin{problem}\label{problem: radial discrete dual Cohn Elkies}
For a fixed $r>0$ and positive integers $m$ and $d$, the (radialized) discrete dual Cohn-Elkies linear program in $\Z_m^d$ is
\begin{align*}
    \text{maximize} \quad & m^{-d/2}\lambda_0 \\
    \text{over all} \quad & \text{real vectors }\mu,\lambda \text{ indexed by } \Z_m^d/G_d \\
    \text{such that} \quad & \lambda_y \geq 0 \quad \text{for all } y \\
    & \lambda = \mc{F}^T\mu \\
    & \mu_x \geq 0 \quad \text{for all } |x|\geq r \\
    & \mu_x = 0 \quad \text{for all } 0 < |x| < r \\
    & \mu_0 = \left(\frac{r}{2}\right)^d.
\end{align*}
\end{problem}
We will provide dual bounds via feasible points to \cref{problem: radial discrete dual Cohn Elkies}.

\section{Dual bounds in dimension 3}\label{section: d = 3}
Hales \cite{hales2005proof} proved the sphere packing center density for $d=3$ is $\delta_3 = 2^{-5/2} < 0.17677670$.
We provide a dual bound on the Cohn-Elkies linear program that proves it is not sharp, i.e., that the Cohn-Elkies linear programming bound is strictly larger than $\delta_3$.
This implies the Cohn-Elkies linear program is unable to prove the 3-dimensional sphere packing problem.
\begin{theorem}\label{theorem: d=3}
The 3-dimensional Cohn-Elkies linear programming bound is greater than $0.18398089$.
\end{theorem}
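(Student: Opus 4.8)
The plan is to exhibit an explicit dual auxiliary function $\mu$ on $\Z_m^d$ for $d=3$ and suitable parameters $m$ and $r$, and then verify feasibility of the resulting $(\mu,\lambda)$ in \cref{problem: radial discrete dual Cohn Elkies}, whose objective value $m^{-d/2}\lambda_0$ will exceed $0.18398089$. By \cref{theorem: discrete reduction} and the chain of inequalities
\begin{align*}
    \left(\tfrac{r}{2}\right)^d f(0)
    \geq \left(\tfrac{r}{2}\right)^d g_m(0)
    \geq \sum_{x\in\Z_m^d} \mu(x) g_m(x)
    = \sum_{y\in\Z_m^d} \lambda(y)\widehat g_m(y)
    \geq m^{-d/2} \lambda(0),
\end{align*}
any feasible dual point furnishes a lower bound on the Cohn-Elkies linear programming bound in $\R^3$, so it suffices to produce one good dual point and certify it rigorously.

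Concretely, I would first run a numerical linear programming solver on the primal \cref{problem: discrete Cohn Elkies} (in the $G_3$-symmetrized form, using the matrix $\mc{F}$) over a range of integer values $r^2$ and moduli $m \geq 2r$, searching for parameters whose optimal value lands safely above $0.18398089$; this is the ``parameter hunt'' described in \cref{section: constructing dual bounds}. Having fixed $d=3$ and such an $(m,r)$, I would extract the approximate optimal dual solution $\mu$, impose the auxiliary constraint $\lambda_y \geq \varepsilon$ with $\varepsilon \approx 10^{-10}$ to create a nonnegativity buffer, re-solve, and then round the resulting $\mu$ to rational values (keeping $\mu_0 = (r/2)^3 = r^3/8$ exactly, as an element of $\Q$ since $r^2\in\N$ forces $r^3 = r\cdot r^2$, and more generally working in $\Q(r)$), while zeroing out $\mu_x$ for $0 < |x| < r$ and keeping $\mu_x \geq 0$ for $|x|\geq r$.

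Next I would compute $\lambda = \mc{F}^T\mu$ exactly using rational and symbolic ($\sqrt{m}$, $r$) arithmetic, and verify by interval arithmetic that every entry $\lambda_y \geq 0$; the buffer $\varepsilon$ should guarantee this survives rounding. This yields an exact feasible point of \cref{problem: radial discrete dual Cohn Elkies} with objective value $m^{-3/2}\lambda_0 \in \Q(r,\sqrt{m})$, whose decimal expansion I would round down to obtain the claimed bound $0.18398089$. The final step is to record the explicit $\mu$ (as a vector over $\Z_m^3/G_3$) so the verification is reproducible.

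The main obstacle is not conceptual but the combination of (i) finding parameters $m,r$ that give a large enough optimal value — since the gap between the discrete and continuous bounds shrinks only as $m,r\to\infty$, the required $m$ may be large enough that $|\Z_m^3/G_3|$ strains the linear programming solver and the exact arithmetic — and (ii) the delicate rounding step: complementary slackness forces many $\lambda_y = 0$ at the true optimum, and a naive rational rounding of $\mu$ typically destroys nonnegativity of exactly those entries. The $\lambda_y \geq \varepsilon$ trick is what makes this robust in practice, but one must check that the induced loss in objective value does not drop the certified bound below $0.18398089$; balancing $\varepsilon$, the denominator size of the rational $\mu$, and the margin above the target is the crux of making the argument go through.
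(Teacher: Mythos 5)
Your proposal follows essentially the same route as the paper: the paper's proof exhibits a rational dual auxiliary function for $m=53$, $r=\sqrt{89}$ (found and rounded exactly as you describe, with the $\lambda_y\geq\varepsilon$ buffer and exact verification), whose objective value truncates to $0.18398089$. The only small correction is that $\mu_0=(r/2)^3$ lies in $\Q(r)$ but is irrational here since $r^2=89$ is not a perfect square, as you note parenthetically.
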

\begin{proof}
Using $m=53$ and $r=\sqrt{89}$, we find a feasible point to \cref{problem: radial discrete dual Cohn Elkies} whose objective value, truncated (i.e., rounded down) to eight decimal points, is $0.18398089$.
The exact rational dual auxiliary function $\mu$, as well as the objective value $m^{-d/2}\lambda_0$, are too large to include in the paper (e.g., the exact expression for the objective value involves rationals of more than 7000 digits).
See supplementary files, available on the arXiv.org e-print archive for this paper, for $\mu$ and its objective value.
\end{proof}

Each supplementary solution file includes a list of representatives of $\Z_m^d/G_d$, the exact dual auxiliary function $\mu$ indexed by these representatives, and the objective value $m^{-d/2}\lambda_0$, i.e., the dual bound that $\mu$ creates.
Code to verify that $\mu$, which uniquely determines $\lambda=\mc{F}^T\mu$, satisfies the constraints of \cref{problem: radial discrete dual Cohn Elkies} is included in a separate supplementary file.

The lowest value of $m$ for which the discrete dual Cohn-Elkies linear program in $\Z_m^3$ yields a dual bound higher than $\delta_3$ is $m=21$, using $r=\sqrt{41}$.

We remark that the best upper bound on $\delta_3$ not using the computer-assisted proof of Hales is 0.183889, due to Cohn, de Laat, and Salmon \cite{cohn2022three}.
Thus, even without Hales' solution to the 3-dimensional sphere packing problem, \cref{theorem: d=3} would still be sufficient to imply non-sharpness of the Cohn-Elkies linear programming bound in dimension 3.
While this observation is not needed for $d=3$, in dimensions where the sphere packing problem is unresolved, this approach will be necessary to prove non-sharpness, as done in the following section for $d=4$ and 5.

\section{Dual bounds in dimensions 4 and 5}\label{section: d = 4 and 5}
Unlike the 3-dimensional case, the sphere packing problem is currently unsolved in dimensions 4 and 5.
The $D_4$ root lattice packing \cite{korkine1872formes}, which is conjectured to be optimal, has upper center density $\frac{1}{8}$.
The best known upper bound for $\delta_4$ is due to Cohn, de Laat, and Salmon \cite{cohn2022three}, and is 0.12891, rounded up.
Thus $0.125 \leq \delta_4 \leq 0.12891$, where it is believed that $\delta_4$ equals its lower bound of $\frac{1}{8}$.
Despite the upper bound on $\delta_4$ not matching the lower bound, we are still able to provide a dual bound on the Cohn-Elkies linear program that exceeds the upper bound, proving non-sharpness and thus that the Cohn-Elkies linear program is unable to prove the 4-dimensional sphere packing problem.

\begin{theorem}\label{theorem: d=4}
The 4-dimensional Cohn-Elkies linear programming bound is greater than $0.12914461$.
\end{theorem}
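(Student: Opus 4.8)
The plan is to apply the discrete dual machinery of \cref{section: discrete dual LP,section: constructing dual bounds} with $d=4$. By \cref{corollary: discrete reduction} together with the chain of inequalities following \cref{problem: discrete dual Cohn Elkies}, every feasible point of \cref{problem: radial discrete dual Cohn Elkies} in $\Z_m^4$ --- for any modulus $m$ and any $r$ with $r^2 \in \N$ and $1 \leq r \leq m/2$ --- produces a lower bound $m^{-2}\lambda_0$ on the $4$-dimensional Cohn-Elkies linear programming bound. So it suffices to exhibit one such feasible point whose objective value exceeds $0.12914461$, a target chosen to sit just above the best known upper bound $\delta_4 \leq 0.12891$ of \cite{cohn2022three}, so that the resulting bound simultaneously establishes non-sharpness.

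First I would search for good parameters: for candidate pairs $(m,r)$ with $r^2$ a positive integer and $r \leq m/2$, I would assemble the $G_4$-symmetrized Fourier matrix $\mc{F}$ indexed by $\Z_m^4/G_4$, solve the finite linear program \cref{problem: radial discrete dual Cohn Elkies} numerically, and retain a pair whose optimal value comfortably clears $0.12914461$. For that pair I would then convert the floating-point optimum into an exact certificate. Re-solving with the strengthened constraint $\lambda_y \geq \varepsilon$ for a small fixed $\varepsilon$ such as $10^{-10}$, the resulting $\mu$-values can be rounded to rationals while the $\varepsilon$-buffer absorbs the rounding error, so that $\lambda = \mc{F}^T\mu$ still satisfies $\lambda_y \geq 0$ for every $y$. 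Note that $\mu_0 = (r/2)^4 = r^4/16$ is itself rational since $r^2 \in \N$, so --- unlike the odd-dimensional case of \cref{theorem: d=3} --- the entire objective value $m^{-2}\lambda_0 = m^{-4}\sum_x \mu_x$ lies in $\Q$.

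With a rational $\mu$ in hand I would finish by rigorous verification: compute $\lambda = \mc{F}^T\mu$ exactly in rational/symbolic arithmetic, check directly that $\mu_x = 0$ for $0 < |x| < r$ and $\mu_x \geq 0$ for $|x| \geq r$, and verify $\lambda_y \geq 0$ for all $y$ using real interval arithmetic. The exact value $m^{-2}\lambda_0$ then truncates to $0.12914461$, which is the claim; the explicit $\mu$ (too large to print) and the verification code would be supplied as supplementary files, exactly as in \cref{section: d = 3}.

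The principal obstacle is computational rather than conceptual. Because the known upper bound on $\delta_4$ is quite tight, the modulus $m$ needed to drive the discrete dual bound above $0.12891$ is fairly large, so $|\Z_m^4/G_4|$ --- and hence both the dimension of the LP and the bit-length of the entries of $\mc{F}$ and of the certificate $\mu$ --- is substantial; the exact arithmetic in the final verification of $\lambda \geq 0$, together with the delicate rounding step that must preserve feasibility, is where essentially all the effort lies. One expects suitable parameters to exist because the Cohn-Elkies bound is conjectured to be non-sharp for $d=4$; the substantive content is to locate them explicitly and certify the bound rigorously.
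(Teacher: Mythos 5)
Your proposal is essentially identical to the paper's approach: the paper also proves this by exhibiting a rational feasible point of \cref{problem: radial discrete dual Cohn Elkies} (found numerically, then rounded using the $\varepsilon$-buffer and verified exactly), specifically with $m=30$ and $r=\sqrt{66}$, whose objective value truncates to $0.12914461$. The only thing missing from your write-up is the explicit choice of parameters and the certificate itself, which the paper likewise relegates to supplementary files.
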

\begin{proof}
Using $m=30$ and $r=\sqrt{66}$, we find a feasible point to \cref{problem: radial discrete dual Cohn Elkies} whose objective value truncated to eight decimal points is $0.12914461$.
See supplementary files for the exact rational dual auxiliary function $\mu$ and the exact objective value $m^{-d/2}\lambda_0$.
\end{proof}

The lowest value of $m$ for which the discrete dual Cohn-Elkies linear program in $\Z_m^4$ yields a dual bound higher than the upper bound on $\delta_4$ is $m=30$.
However, the lowest value of $m$ which yields a dual bound higher than the lower bound on $\delta_4$ is $m=16$, using $r=\sqrt{30}$.

For $d=5$, the best known sphere packing is the $D_5$ root lattice \cite{korkine1873formes} with an upper center density of $2^{-7/2}$.
The best known upper bound for $\delta_5$ is due to Cohn, de Laat, and Salmon \cite{cohn2022three}, and is 0.09740, rounded up.
We provide a dual bound on the Cohn-Elkies linear program that exceeds this upper bound.
\begin{theorem}\label{theorem: d=5}
The 5-dimensional Cohn-Elkies linear programming bound is greater than $0.09826308$.
\end{theorem}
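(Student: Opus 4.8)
The proof will follow the same computational template already used for \cref{theorem: d=3,theorem: d=4}. The plan is to exhibit an explicit feasible point of \cref{problem: radial discrete dual Cohn Elkies} in $\Z_m^5$ for a suitably chosen modulus $m$ and radius $r$ with $r^2\in\N$ and $1\leq r\leq m/2$, so that \cref{theorem: discrete reduction} applies, and such that the resulting objective value $m^{-d/2}\lambda_0$ exceeds $0.09826308$. Since this is larger than the upper bound $0.09740$ on $\delta_5$ from \cite{cohn2022three}, the feasible point will in particular certify non-sharpness of the $5$-dimensional Cohn-Elkies linear programming bound.

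First I would run a numerical linear programming solver on the radialized discrete dual program for a range of candidate pairs $(m,r)$, searching for parameters whose optimal value comfortably clears the target while the number of variables $|\Z_m^5/G_d|$ remains small enough to handle, and then fix the smallest such $m$ that is computationally manageable. To make the floating-point optimum amenable to exact certification, I would solve the program with the strengthened constraint $\lambda_y\geq\varepsilon$ for a small $\varepsilon$ such as $10^{-10}$; as discussed in \cref{section: constructing dual bounds}, this buffer lets the approximate solution $\mu$ be rounded to a nearby rational vector while preserving nonnegativity of every $\lambda_y$ after rounding, at negligible cost to the bound.

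Next I would round the components $\mu_x$ for $x\neq 0$ to rationals, keeping $\mu_x=0$ for $0<|x|<r$ and $\mu_0=\left(\frac{r}{2}\right)^5$ exactly, then compute $\lambda=\mc{F}^T\mu$ symbolically over $\Q(r)$ using exact rational arithmetic, and finally verify $\lambda_y\geq 0$ for every $y\in\Z_m^5/G_d$ using real interval arithmetic. The chain of inequalities following \cref{eq: discrete dual summation formula} then shows that every auxiliary function $f$ on $\R^5$ has objective value at least $m^{-d/2}\lambda_0$, and I would report this value truncated to eight decimal places; the exact data for $\mu$ and the objective value, being large, would be placed in the supplementary files.

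The main obstacle is computational rather than conceptual: \cref{theorem: discrete reduction} already does the mathematical work, so the difficulty is in locating parameters $(m,r)$ for which the discrete dual bound surpasses $0.0974$ while the linear program over $\Z_m^5/G_d$ stays small enough both to solve numerically and, more stringently, to certify exactly in rational arithmetic, since the bit-size of the exact $\mu$ and of $\lambda_0$ grows quickly with $m$ and $d$.
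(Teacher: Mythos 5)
Your proposal matches the paper's proof: it exhibits a feasible point of \cref{problem: radial discrete dual Cohn Elkies} found and certified exactly by the pipeline of \cref{section: constructing dual bounds}, with the specific parameters being $m=24$ and $r=\sqrt{34}$ (the smallest $m$ that clears the upper bound on $\delta_5$), and the exact rational $\mu$ and objective value relegated to supplementary files. No substantive differences.
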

\begin{proof}
Using $m=24$ and $r=\sqrt{34}$, we find a feasible point to \cref{problem: radial discrete dual Cohn Elkies} whose objective value truncated to eight decimal points is $0.09826308$.
See supplementary files for the exact rational dual auxiliary function $\mu$ and the exact objective value $m^{-d/2}\lambda_0$.
\end{proof}
Thus, the Cohn-Elkies linear program is not sharp in dimension 5, i.e., cannot prove the 5-dimensional sphere packing problem.
The lowest value of $m$ for which the discrete dual Cohn-Elkies linear program in $\Z_m^5$ proves non-sharpness is $m=24$.
However, the lowest value of $m$ which yields a dual bound higher than the $D_5$ root lattice packing's upper center density is $m=10$, using $r=\sqrt{14}$.

\section{Dual bounds for $6 \leq d \leq 12$ except $d=8$}\label{section: beat construction}
Except for the solved case $d=24$, for $17 \leq d \leq 179$, the best known upper bound \cite{afkhami2020high} is essentially the same as the best known bound from the Cohn-Elkies linear program.
The bounds from \cite{cohn2022three} are conjecturally better, but have not been explicitly computed for $d \geq 17$.
It is thus nearly impossible to prove non-sharpness of the Cohn-Elkies linear programming bound, as we did for $3\leq d \leq 5$, without a better upper bound that does not come from the Cohn-Elkies linear program itself.
As such, the next best result towards non-sharpness is proving a dual bound that exceeds the upper center density of the best known sphere packing.
For the small values of $d$ that we consider in this paper, the best known sphere packings are conjecturally believed to be optimal.

For $6 \leq d \leq 12$ except for the solved case $d=8$, while there exist upper bounds better than those provided by the Cohn-Elkies linear program, the strongest of which is by Cohn, de Laat, and Salmon \cite{cohn2022three}, due to increasing computational cost we were unable to prove non-sharpness of the Cohn-Elkies linear programming bound in these cases.
However, in these cases we were still able to prove the next best alternative, a dual bound exceeding the upper center density of the best known sphere packing.

\subsection{Dimension 6}\label{subsection: d = 6}
For $d=6$, the best known sphere packing is the $E_6$ root lattice \cite{korkine1873formes} with upper center density $2^{-3}3^{-1/2} < 0.07216879$.
We find a feasible point to \cref{problem: radial discrete dual Cohn Elkies} whose objective value exceeds this upper center density.
\begin{theorem}\label{theorem: d=6}
The 6-dimensional Cohn-Elkies linear programming bound is greater than $0.07632412$.
\end{theorem}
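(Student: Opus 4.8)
The plan is to mirror the strategy used for \cref{theorem: d=3,theorem: d=4,theorem: d=5}: exhibit an explicit feasible point of \cref{problem: radial discrete dual Cohn Elkies} in $\Z_m^6$ for suitable $m$ and $r$, whose objective value $m^{-d/2}\lambda_0$ exceeds $0.07632412$. By the chain of inequalities displayed after \cref{problem: discrete dual Cohn Elkies}, together with \cref{corollary: discrete reduction}, any such feasible point certifies that the $6$-dimensional Cohn-Elkies linear programming bound is at least $m^{-d/2}\lambda_0$, which in turn exceeds the $E_6$ center density $2^{-3}3^{-1/2}$.

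First I would search over parameters $m$ and $r$, restricting to $r^2\in\N$ and $1\le r\le m/2$ so that \cref{theorem: discrete reduction} applies, and for each candidate pair approximately solve the discrete dual Cohn-Elkies linear program in $\Z_m^6$ with a floating-point LP solver, working in the reduced coordinates indexed by $\Z_m^d/G_d$ via the symmetrized Fourier matrix $\mc{F}$. The aim is to locate a pair whose optimal value comfortably clears $2^{-3}3^{-1/2}$; since the gap shrinks as $m$ grows, I expect to need $m$ noticeably larger than in dimensions $3$ through $5$.

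Given a good floating-point dual solution $\mu$, I would round it to a rational vector, imposing the slightly strengthened constraint $\lambda_y\ge\varepsilon$ (e.g.\ $\varepsilon=10^{-10}$) during optimization so that rounding does not destroy nonnegativity of $\lambda=\mc{F}^T\mu$, while keeping $\mu_x=0$ for $0<|x|<r$ and $\mu_0=\left(\frac{r}{2}\right)^d$ exactly. Then $\lambda$ can be computed exactly with rational and symbolic arithmetic, its entries verified nonnegative by rigorous real interval arithmetic, and the objective value $m^{-d/2}\lambda_0\in\Q(r,\sqrt{m})$ evaluated and truncated to eight decimal places. As in the previous sections, the exact data for $\mu$ and the objective value are too large to display in the paper and are deferred to the supplementary files.

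The main obstacle is purely computational: the number of variables in \cref{problem: radial discrete dual Cohn Elkies} grows like $|\Z_m^6/G_d|\approx m^6/|G_d|$, so finding an $m$ large enough that the discrete bound exceeds $2^{-3}3^{-1/2}$ while the linear program remains solvable—and the resulting rational $\mu$ stays manageable—is the crux. A secondary difficulty is the rounding step, namely guaranteeing that the rational $\mu$ keeps every $\lambda_y$ nonnegative; here the $\varepsilon$-buffer trick of \cref{section: constructing dual bounds} is expected to suffice in practice.
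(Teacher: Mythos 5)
Your proposal matches the paper's approach exactly: the proof of \cref{theorem: d=6} is precisely an explicit rational dual auxiliary function for \cref{problem: radial discrete dual Cohn Elkies}, found by floating-point LP search, rounded with the $\varepsilon$-buffer, verified exactly, and deferred to supplementary files (the paper uses $m=12$, $r=4$). One small miscalibration: you expect $m$ to be larger than in dimensions $3$--$5$, but in fact the required $m$ decreases with dimension (here $m=12$, versus $m=53$, $30$, $24$ for $d=3,4,5$), since the target is only the $E_6$ center density rather than a proof of non-sharpness.
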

\begin{proof}
See supplementary files for an exact rational dual auxiliary function $\mu$ for $m=12$ and $r=4$ whose exact objective value $m^{-d/2}\lambda_0$ rounds down to $0.07632412$.
\end{proof}
Hence, the Cohn-Elkies linear program is unable to prove that the $E_6$ root lattice is optimal in dimension 6.
The lowest value of $m$ which yields a dual bound higher than the $E_6$ root lattice packing's upper center density is $m=8$, using $r=4$.

The best known upper bound on $\delta_6$ is $0.07939747$, rounded up \cite{cohn2022three}.
The ongoing work of de Courcy-Ireland, Dostert, and Viazovska \cite{de2022dual} to prove non-sharpness in $d=6$ would create a dual bound exceeding this best known upper bound, and hence would be a stronger result than \cref{theorem: d=6}.
Their provisional value for the dual bound is 0.0795223.

\subsection{Dimension 7}\label{subsection: d = 7}
For $d=7$, the best known sphere packing is the $E_7$ root lattice \cite{korkine1873formes} with upper center density $2^{-4}=0.0625$.
As before, we find a feasible point to \cref{problem: radial discrete dual Cohn Elkies} whose objective value exceeds this upper center density.
\begin{theorem}\label{theorem: d=7}
The 7-dimensional Cohn-Elkies linear programming bound is greater than $0.06374745$.
\end{theorem}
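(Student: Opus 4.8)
The plan is to follow the dual bound construction of \cref{section: discrete dual LP,section: constructing dual bounds}, specialized to $d=7$. By \cref{theorem: discrete reduction} together with the chain of inequalities displayed after \cref{problem: discrete dual Cohn Elkies}, any feasible point of the (radialized) discrete dual Cohn-Elkies linear program \cref{problem: radial discrete dual Cohn Elkies} in $\Z_m^7$ with $m \geq 2r$ produces the lower bound $m^{-7/2}\lambda_0$ on the $7$-dimensional Cohn-Elkies linear programming bound. So it suffices to exhibit one such feasible point with objective value exceeding $0.06374745$; since this already exceeds the $E_7$ center density $2^{-4}=0.0625$, it will in particular show the Cohn-Elkies bound cannot certify the $E_7$ packing as optimal.

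First I would search over small moduli $m$ and integer values $r^2$ with $1 \leq r \leq m/2$ (so that \cref{theorem: discrete reduction} applies), approximately solving the finite-dimensional linear program \cref{problem: radial discrete dual Cohn Elkies} with a floating-point LP solver, to locate parameters $(m,r)$ whose discrete Cohn-Elkies bound comfortably clears the target $0.06374745$. Indexing $\mu$ and $\lambda$ by orbit representatives of $\Z_m^7/G_7$ keeps the number of variables manageable, and the matrix $\mc{F}$ of the $G_7$-symmetrized discrete Fourier transform need only be assembled once per choice of $(m,r)$.

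Having fixed suitable parameters, I would convert the floating-point optimum into a certified rational dual auxiliary function $\mu$. As explained in \cref{section: constructing dual bounds}, the obstruction is that naive rounding can destroy nonnegativity of $\lambda_y$ at the many indices where complementary slackness forces $\lambda_y = 0$; imposing the slightly stronger constraint $\lambda_y \geq \varepsilon$ (for instance $\varepsilon = 10^{-10}$) in the LP before rounding gives enough slack that rounding $\mu$ to nearby rationals preserves $\lambda_y \geq 0$ for all $y$. I would then compute $\lambda = \mc{F}^T \mu$ exactly (in arithmetic over $\Q(\sqrt{m})$, since $\mu_0 = (r/2)^7$ may be irrational), verify $\lambda_y \geq 0$ for every orbit representative $y$ via interval arithmetic, and evaluate the exact objective value $m^{-7/2}\lambda_0$, truncating its decimal expansion downward to obtain the stated bound. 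The exact $\mu$ and objective value would be deferred to the supplementary files.

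The main obstacle I anticipate is computational rather than conceptual: in dimension $7$ the number of orbit representatives of $\Z_m^7/G_7$ grows quickly in $m$, so both the LP solve and the exact rational verification become costly, and some experimentation is needed to find a modulus $m$ small enough to remain tractable yet large enough that the discrete Cohn-Elkies bound has risen above $0.06374745$. Once a workable pair $(m,r)$ is found, the remaining steps are routine exact linear algebra followed by interval-arithmetic certification of nonnegativity.
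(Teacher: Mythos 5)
Your proposal is correct and follows exactly the method the paper uses: the paper's proof simply exhibits a certified rational dual auxiliary function for \cref{problem: radial discrete dual Cohn Elkies} with $m=8$ and $r=4$ (note $m=2r$, so \cref{theorem: discrete reduction} applies at the boundary), whose exact objective value $m^{-7/2}\lambda_0$ truncates to $0.06374745$, deferring the data to supplementary files. The only thing your write-up lacks is the specific parameter pair, which your proposed search would locate.
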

\begin{proof}
See supplementary files for an exact rational dual auxiliary function $\mu$ for $m=8$ and $r=4$ whose exact objective value $m^{-d/2}\lambda_0$ rounds down to $0.06374745$.
\end{proof}
The Cohn-Elkies linear program is therefore unable to prove that the $E_7$ root lattice is optimal in dimension 7.
The provided solution with $m=8$ is the lowest value of $m$ that yields a dual bound higher than the $E_7$ root lattice packing's upper center density.

The best known upper bound on $\delta_7$ is $0.06797101$, rounded up \cite{cohn2022three}.

\subsection{Dimension 9}\label{subsection: d = 9}
The 8-dimensional sphere packing problem was solved via the Cohn-Elkies linear programming bound, so the bound is known to be sharp.
Hence, the next dimension of interest is $d=9$, for which the best known packing is the $\Lambda_9$ packing \cite{chaundy1946arithmetic} with upper center density $2^{-9/2} < 0.04419418$.
As before, we find a dual bound exceeding this upper center density.
\begin{theorem}\label{theorem: d=9}
The 9-dimensional Cohn-Elkies linear programming bound is greater than or equal to $\frac{1}{20}=0.05$.
\end{theorem}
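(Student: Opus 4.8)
The plan is to exhibit an explicit feasible point of \cref{problem: radial discrete dual Cohn Elkies} in dimension $d=9$ whose objective value $m^{-d/2}\lambda_0$ is at least $\tfrac{1}{20}$, mirroring the approach used for \cref{theorem: d=3,theorem: d=4,theorem: d=5,theorem: d=6,theorem: d=7}. Concretely, I would first select parameters $m$ and $r$ (with $r^2\in\N$ and $1\le r\le m/2$, so that \cref{theorem: discrete reduction} applies) for which a linear programming solver indicates the discrete dual Cohn-Elkies bound in $\Z_m^9$ clears $0.05$; given the computational cost noted in this section, one expects a fairly small $m$, say $m$ in the single digits or low teens, together with a correspondingly small integer $r^2$. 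Because $G_d$-invariance may be assumed, the optimization is carried out over vectors $\mu,\lambda$ indexed by $\Z_m^9/G_9$, with $\lambda=\mc{F}^T\mu$.

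The key steps, in order: (1) run the solver to obtain an approximate optimal $\mu$, imposing the slack constraint $\lambda_y\ge\varepsilon$ (e.g. $\varepsilon=10^{-10}$) as described in \cref{section: constructing dual bounds} so that rounding is safe; (2) round $\mu$ to a rational vector respecting $\mu_x=0$ for $0<|x|<r$ and $\mu_0=(r/2)^9$ (the latter value being rational or a simple surd depending on parity of $r^2$ and whether $r$ itself is an integer); (3) compute $\lambda=\mc{F}^T\mu$ exactly via rational and symbolic arithmetic, using the closed form $\mc{F}_{xy}=m^{-d/2}\sum_{z\sim y}e^{-\frac{2\pi i}{m}\langle x,z\rangle}$; (4) verify $\lambda_y\ge 0$ for every orbit representative $y$ by rigorous real interval arithmetic; and (5) evaluate $m^{-9/2}\lambda_0$ exactly and check it is $\ge \tfrac{1}{20}$. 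Steps (3)–(5) then certify, via the chain of inequalities
\[
\left(\tfrac{r}{2}\right)^9 f(0)\ \ge\ \left(\tfrac{r}{2}\right)^9 g_m(0)\ \ge\ \sum_{x\in\Z_m^9}\mu(x)g_m(x)\ =\ \sum_{y\in\Z_m^9}\lambda(y)\widehat g_m(y)\ \ge\ \lambda(0)\widehat g_m(0)\ \ge\ m^{-9/2}\lambda(0),
\]
that every auxiliary function $f$ on $\R^9$ has objective value at least $\tfrac{1}{20}$, which is precisely the claimed bound on the Cohn-Elkies linear programming bound.

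The main obstacle is the computational one: in dimension $9$ the number of $G_9$-orbits on $\Z_m^9$ grows quickly in $m$, so both finding parameters for which the bound exceeds $0.05$ and then carrying out the exact rational verification of nonnegativity of $\lambda$ over all orbits is the bottleneck—this is exactly why the section restricts to $6\le d\le 12$ (excluding $8$) rather than proving full non-sharpness. A secondary subtlety is ensuring the rounding in step (2) does not destroy the nonnegativity dictated by complementary slackness at the many $y$ where the optimal $\lambda_y$ vanishes; the $\varepsilon$-buffer trick from \cref{section: constructing dual bounds} resolves this in practice. As in the earlier theorems, the explicit $\mu$ and the exact objective value are too large to display and are deferred to the supplementary files, with the verification reduced to a finite check that a computer performs rigorously.
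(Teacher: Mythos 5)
Your proposal is correct and follows essentially the same approach as the paper: exhibit an explicit rational feasible point of \cref{problem: radial discrete dual Cohn Elkies} and verify it exactly, with the certificate deferred to supplementary files. The only differences are incidental: the paper uses the very small parameters $m=4$, $r=2$, and because the solver's output lay extremely close to $\frac{1}{5}\Z$ it was rounded directly to $\frac{1}{5}\Z$ (yielding $m^{-9/2}\lambda_0 = 4^{-9/2}\cdot\frac{2^7}{5} = \frac{1}{20}$ exactly and a $\lambda$ simple enough to state explicitly) rather than invoking the $\varepsilon$-buffer trick you describe.
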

\begin{proof}
See supplementary files for an exact rational dual auxiliary function $\mu$ for $m=4$ and $r=2$ whose exact objective value $m^{-d/2}\lambda_0=4^{-9/2}\frac{2^7}{5}=\frac{1}{20}=0.05$.
The approximate solution for $\mu$ from computer optimization seemed to take on values extremely close to $\frac{1}{5}\Z$, so rather than using the buffering technique of constraining $\lambda(y) \geq \varepsilon > 0$ from \cref{section: constructing dual bounds}, we simply rounded the approximate solution to take values in $\frac{1}{5}\Z$.
This simple solution was then exactly verified, as before.
The solution is elegant enough to explicitly state $\lambda$ (though $\mu$ has a larger support, so we do not include its value in the paper): we have $\lambda_x = \frac{128}{5}$ for $x=0=(0,\dots,0)\in\Z_4^9/G_9$ and $x=(2,\dots,2)$; $\lambda_x = \frac{1152}{5}$ for representatives $x=(0,0,0,0,2,2,2,2,2)$ and $(0,0,0,0,0,2,2,2,2)$; and $\lambda_x=0$ for all other representatives $x \in \Z_4^9/G_9$.
\end{proof}
The provided solution with $m=4$ is the lowest value of $m$ that yields a dual bound higher than the best known packing's upper center density.

The best known upper bound on $\delta_9$ is 0.05794146, rounded up \cite{cohn2022three}.

\subsection{Dimension 10}\label{subsection: d = 10}
For $d=10$, the best known sphere packing \cite{best1980binary} has upper center density $5\cdot 2^{-7} = 0.0390625$.
We provide a dual bound exceeding this upper center density.
\begin{theorem}\label{theorem: d=10}
The 10-dimensional Cohn-Elkies linear programming bound is greater than or equal to $\frac{1}{24}> 0.04166666$.
\end{theorem}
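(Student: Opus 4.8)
The plan is to follow the template established in the proofs of \cref{theorem: d=3,theorem: d=4,theorem: d=5,theorem: d=6,theorem: d=7,theorem: d=9}: for a suitable pair of small parameters $m$ and $r$ with $m \geq 2r$, exhibit an explicit rational dual auxiliary function $\mu$ on $\Z_m^{10}$, compute $\lambda = \mc{F}^T\mu$ exactly, and check that this feasible point of \cref{problem: radial discrete dual Cohn Elkies} has objective value $m^{-d/2}\lambda_0 = \frac{1}{24}$. Given how clean the target is, I expect a very coarse modulus to work---most plausibly $m = 4$, $r = 2$, exactly as in \cref{theorem: d=9}, in which case $m^{-d/2} = 4^{-5} = 2^{-10}$ and one needs $\lambda_0 = 2^{10}/24 = 128/3$. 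As in dimension $9$, I would anticipate that the numerical optimizer snaps to values on a coarse rational lattice (here a multiple of $\frac{1}{3}$), so that simply rounding the approximate solution---rather than invoking the $\lambda(y) \geq \varepsilon$ buffering device of \cref{section: constructing dual bounds}---already produces an exactly verifiable $\mu$.

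Concretely, the steps are: (i) run a linear programming solver for the discrete dual Cohn-Elkies program in $\Z_m^{10}$ over a handful of candidate pairs $(m,r)$, keeping parameters whose optimum exceeds the upper center density $5 \cdot 2^{-7} = 0.0390625$ of the best known $10$-dimensional packing \cite{best1980binary}; (ii) round the approximate $\mu$ to rationals; and (iii) verify feasibility exactly---namely $\mu_0 = (r/2)^{10}$, $\mu_x = 0$ for $0 < |x| < r$, $\mu_x \geq 0$ for $|x| \geq r$, and, the only substantive check, $\lambda_y = (\mc{F}^T\mu)_y \geq 0$ for every orbit representative $y \in \Z_m^{10}/G_{10}$---using exact rational and symbolic arithmetic. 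Since $\mu$ and hence $\lambda$ are exact, the objective value $m^{-d/2}\lambda_0 = \frac{1}{24}$ is exact.

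The bound then follows from the inequality chain displayed after \cref{problem: discrete dual Cohn Elkies}, invoking \cref{theorem: discrete reduction} (applicable because $m \geq 2r$): for any auxiliary function $f$ on $\R^{10}$ there is an auxiliary function $g_m$ on $\Z_m^{10}$ with $g_m(0) \leq f(0)$, whence
\begin{align*}
\left(\frac{r}{2}\right)^{10} f(0) &\geq \left(\frac{r}{2}\right)^{10} g_m(0) \geq \sum_{x \in \Z_m^{10}} \mu(x) g_m(x) \\
&= \sum_{y \in \Z_m^{10}} \lambda(y) \widehat g_m(y) \geq \lambda(0) \widehat g_m(0) \geq m^{-d/2}\lambda(0) = \frac{1}{24}.
\end{align*}
Taking the infimum over all auxiliary functions $f$ shows the $10$-dimensional Cohn-Elkies linear programming bound is at least $\frac{1}{24}$; truncating the decimal expansion gives $\frac{1}{24} > 0.04166666$, and since $\frac{1}{24} > 5 \cdot 2^{-7}$ this also shows the Cohn-Elkies linear program cannot certify the best known $10$-dimensional packing as optimal.

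The main obstacle is computational rather than conceptual and is exactly the one encountered in the lower dimensions: producing an approximate dual optimizer that rounds to a genuinely feasible rational $\mu$---in particular one for which the equality constraints $\lambda(y) = 0$ predicted by complementary slackness survive rounding as valid inequalities $\lambda(y) \geq 0$---and then completing the exact nonnegativity verification over all of $\Z_m^{10}/G_{10}$. The verification cost grows with $m$, so the real work is locating parameters small enough to certify cleanly yet large enough to clear $5 \cdot 2^{-7}$; the clean closed form $\frac{1}{24}$ is strong evidence that such modest parameters exist.
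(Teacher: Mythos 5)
Your proposal matches the paper's proof exactly: the paper uses $m=4$, $r=2$, rounds the numerical optimizer's output to values in $\frac{1}{3}\Z$ rather than using the $\varepsilon$-buffering device, and verifies the resulting rational $\mu$ exactly, obtaining $\lambda_0 = \frac{128}{3}$ and objective value $4^{-5}\cdot\frac{128}{3} = \frac{1}{24}$, precisely as you predicted. The only content you could not supply blind is the explicit solution itself, which the paper likewise relegates to supplementary files (stating only the support of $\lambda$).
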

\begin{proof}
See supplementary files for an exact rational dual auxiliary function $\mu$ for $m=4$ and $r=2$ whose exact objective value $m^{-d/2}\lambda_0=4^{-10/2}\frac{2^7}{3}=\frac{1}{24}$.
The approximate solution for $\mu$ from computer optimization takes on values extremely close to $\frac{1}{3}\Z$, so rather than using the buffering technique from \cref{section: constructing dual bounds}, we rounded the approximate solution to take values in $\frac{1}{3}\Z$.
This simple solution was then exactly verified, as before.
The solution is elegant enough to explicitly state $\lambda$: we have $\lambda_x = \frac{128}{3}$ for $x=0=(0,\dots,0)\in\Z_4^{10}/G_{10}$ and $x=(2,\dots,2)$; $\lambda_x = \frac{2560}{3}$ for $x=(0,0,0,0,1,1,2,2,2,2)$; $\lambda_x=\frac{256}{3}$ for $x=(0,0,0,0,0,2,2,2,2,2)$; and $\lambda_x=0$ for all other representatives $x \in \Z_4^{10}/G_{10}$.
\end{proof}
The provided solution with $m=4$ is the lowest value of $m$ that yields a dual bound higher than the best known upper center density.

The best known upper bound on $\delta_{10}$ is $0.05623564$, rounded up \cite{cohn2022three}.

\subsection{Dimension 11}\label{subsection: d = 11}
For $d=11$, the best known sphere packing \cite{leech1971sphere} has upper center density $3^2\cdot2^{-8}=\frac{9}{256}=0.03515625$.
We provide a dual bound exceeding this upper center density.
\begin{theorem}\label{theorem: d=11}
The 11-dimensional Cohn-Elkies linear programming bound is greater than or equal to $\frac{1}{24}> 0.04166666$.
\end{theorem}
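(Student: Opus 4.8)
The plan is to proceed exactly as in the proofs of \cref{theorem: d=9,theorem: d=10}: exhibit an explicit rational dual auxiliary function $\mu$ on $\Z_m^{11}$ for suitably small parameters $m$ and $r$, compute $\lambda=\mc{F}^T\mu$ in exact arithmetic, check $\lambda\geq 0$ entrywise, and read off the objective value $m^{-d/2}\lambda_0$. By the chain of inequalities following \cref{problem: discrete dual Cohn Elkies} (equivalently \cref{corollary: discrete reduction} applied to the construction of \cref{theorem: discrete reduction}), this objective value is a lower bound on the continuous Cohn-Elkies linear programming bound in $\R^{11}$, so it suffices to produce a feasible $\mu$ with $m^{-d/2}\lambda_0=\frac{1}{24}$. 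Since the two preceding dimensions both succeeded with the minimal useful modulus $m=4$ and radius $r=2$, I would first attempt $m=4$, $r=2$ here as well; then $m^{-d/2}=4^{-11/2}=2^{-11}$, so the target $\frac{1}{24}$ forces $\lambda_0=\frac{2048}{24}=\frac{256}{3}$, suggesting (as in $d=10$) that the optimal $\mu$ and $\lambda$ will take values in $\frac{1}{3}\Z$ on $\Z_4^{11}/G_{11}$.

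Concretely, the steps are: (i) solve \cref{problem: radial discrete dual Cohn Elkies} numerically for $d=11$, $m=4$, $r=2$ to obtain a floating-point near-optimal $\mu$, noting that $\Z_4^{11}/G_{11}$ has only $\binom{13}{2}=78$ orbits so the linear program is small; (ii) observe that the entries of the approximate $\mu$ cluster near $\frac{1}{3}\Z$ and round $\mu$ to lie exactly in $\frac{1}{3}\Z$, preserving the hard constraints $\mu_0=(r/2)^{11}=1$, $\mu_x=0$ for $0<|x|<2$, and $\mu_x\geq 0$ for $|x|\geq 2$; (iii) using the $G_{11}$-symmetrized discrete Fourier matrix $\mc{F}$ of \cref{section: constructing dual bounds}, compute $\lambda=\mc{F}^T\mu$ exactly over $\Q$; (iv) verify $\lambda_y\geq 0$ for every representative $y\in\Z_4^{11}/G_{11}$, which since everything is rational is a finite exact check; and (v) confirm $2^{-11}\lambda_0=\frac{1}{24}$. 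As the rounded $\mu$ has small support and rational values, I expect $\lambda$ to again be expressible in closed form, supported on a handful of $G_{11}$-orbits of vectors with coordinates in $\{0,1,2\}$, as in the $d=9,10$ cases, so the solution can be stated explicitly.

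The main obstacle is the feasibility-versus-rounding tension flagged in \cref{section: constructing dual bounds}: at the true optimum, complementary slackness forces many $\lambda_y$ to vanish, so naively rounding $\mu$ can nudge some of those $\lambda_y$ slightly negative and break feasibility. In $d=9$ and $d=10$ one was fortunate that the solver landed essentially exactly on $\frac{1}{5}\Z$ and $\frac{1}{3}\Z$ respectively, making rounding harmless, and I expect the same luck for $d=11$; the safeguard, if not, is to re-solve with the buffer constraint $\lambda_y\geq\varepsilon$ for some small $\varepsilon$ (say $\varepsilon=10^{-10}$) before rounding, trading a negligible amount of the objective for guaranteed nonnegativity. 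A minor secondary task is to confirm that $m=4$ is genuinely the smallest modulus that works, i.e., that the tiny linear programs for $m=2$ and $m=3$ (where $m\geq 2r$ with $r^2\in\N$ forces $r=1$) do not already yield a dual bound exceeding the best known center density $\frac{9}{256}$; this is settled by direct computation.
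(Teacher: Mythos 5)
Your proposal matches the paper's proof: the paper also uses $m=4$, $r=2$, rounds the numerical solution to $\frac{1}{3}\Z$, verifies $\lambda=\mc{F}^T\mu\geq 0$ exactly, and obtains $m^{-d/2}\lambda_0=4^{-11/2}\frac{2^8}{3}=\frac{1}{24}$. Your computed $\lambda_0=\frac{256}{3}$ and orbit count $\binom{13}{2}=78$ are both consistent with the paper.
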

\begin{proof}
See supplementary files for an exact rational dual auxiliary function $\mu$ for $m=4$ and $r=2$ whose exact objective value $m^{-d/2}\lambda_0 = 4^{-11/2}\frac{2^8}{3}=\frac{1}{24}$.
Similar to the case $d=10$, the approximate solution for $\mu$ consists of values extremely close to $\frac{1}{3}\Z$, so we rounded the approximate solution to lie in $\frac{1}{3}\Z$.
This simple solution was then exactly verified, as before.
The solution is elegant enough to explicitly state $\lambda$: we have $\lambda_x = \frac{256}{3}$ for $x=0=(0,\dots,0)\in\Z_4^{11}/G_{11}$ and $x=(2,\dots,2)$; $\lambda_x = \frac{1280}{3}$ for $x=(0,0,0,0,0,2,2,2,2,2,2)$ and $(0,0,0,0,0,0,2,2,2,2,2)$; $\lambda_x=1024$ for $x=(0,0,0,0,0,1,2,2,2,2,2)$; and $\lambda_x=0$ for all other representatives $x \in \Z_4^{11}/G_{11}$.
\end{proof}
The provided solution with $m=4$ is the lowest value of $m$ that yields a dual bound higher than the best known upper center density.

The best known upper bound on $\delta_{11}$ is $0.05664513$, rounded up \cite{cohn2022three}.

\subsection{Dimension 12}\label{subsection: d = 12}
For $d=12$, the best known sphere packing \cite{coxeter1953extreme} has upper center density $3^{-3}=\frac{1}{27} < 0.03703704$.
This is the first dimension for which the best known sphere packing upper center density increased compared to the previous value of $d$.
We provide a dual bound exceeding this upper center density.
\begin{theorem}\label{theorem: d=12}
The 12-dimensional Cohn-Elkies linear programming bound is greater than or equal to $\frac{1}{24}> 0.04166666$.
\end{theorem}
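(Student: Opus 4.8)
The plan is to proceed exactly as in dimensions $9$, $10$, and $11$: exhibit an explicit rational dual auxiliary function on $\Z_m^{12}$ for a small modulus $m$, verify it is feasible for \cref{problem: radial discrete dual Cohn Elkies}, and read off its objective value. Guided by the fact that the best known packing center density in this dimension ($\frac{1}{27}$) is only moderately larger than in dimension $11$, I would first try $m=4$ and $r=2$, so that $\mu_0=\left(\frac{r}{2}\right)^{12}=1$ and $m\geq 2r$, ensuring \cref{theorem: discrete reduction} applies. Solving the $G_{12}$-symmetrized linear program numerically---optimizing over vectors $\mu,\lambda$ indexed by the orbit space $\Z_4^{12}/G_{12}$ with $\lambda=\mc{F}^T\mu$---should produce an approximate optimum whose entries visibly cluster on a lattice, presumably $\frac{1}{3}\Z$ as for $d=10,11$, and whose objective value is close to $\frac{1}{24}$.

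The second step is to round the floating-point solution to exact rationals in $\frac{1}{3}\Z$, compute $\lambda=\mc{F}^T\mu$ symbolically with exact rational arithmetic, and then check the feasibility conditions of \cref{problem: radial discrete dual Cohn Elkies}: $\mu_x=0$ for $0<|x|<r$, $\mu_x\geq 0$ for $|x|\geq r$, $\mu_0=1$, and, most importantly, $\lambda_y\geq 0$ for every orbit representative $y$. This last verification is the crux and the place where the proposal could fail: complementary slackness forces $\lambda_y=0$ on many representatives, so naive rounding need not preserve nonnegativity there. If it does not, I would fall back on the buffering technique from \cref{section: constructing dual bounds}, re-solving with the stronger constraint $\lambda_y\geq\varepsilon$ for small $\varepsilon>0$ before rounding; but since the approximate solution appears to sit essentially exactly on $\frac{1}{3}\Z$, direct rounding should already work, just as it did for $d=10$ and $11$.

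Once feasibility is confirmed, the objective value is $m^{-d/2}\lambda_0 = 4^{-6}\lambda_0$, so targeting $\frac{1}{24}$ forces $\lambda_0=\frac{4096}{24}=\frac{512}{3}$, a value consistent with the $\frac{1}{3}\Z$-valued pattern of the other low-dimensional solutions; as for $d=9,10,11$, I expect the vector $\lambda$ to be supported on only a handful of orbit representatives, so it can be stated explicitly. Finally, the chain of inequalities displayed just after \cref{problem: discrete dual Cohn Elkies}, applied to the function $g_m$ produced by \cref{theorem: discrete reduction} from an arbitrary auxiliary function $f$ on $\R^{12}$, gives $\left(\frac{r}{2}\right)^{12}f(0)\geq m^{-d/2}\lambda_0=\frac{1}{24}$; taking the infimum over all auxiliary functions $f$ yields the claimed lower bound on the $12$-dimensional Cohn-Elkies linear programming bound. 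The exact $\mu$ (and $\lambda$), together with verification code, would be placed in the supplementary files as in the preceding sections.
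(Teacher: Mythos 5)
Your proposal matches the paper's proof: the paper also uses $m=4$, $r=2$, rounds the numerical solution to $\frac{1}{3}\Z$, verifies feasibility exactly, and obtains $\lambda_0=\frac{512}{3}$ giving the objective value $4^{-6}\cdot\frac{512}{3}=\frac{1}{24}$. The approach and all key details are essentially identical.
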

\begin{proof}
See supplementary files for an exact rational dual auxiliary function $\mu$ for $m=4$ and $r=2$ whose exact objective value $m^{-d/2}\lambda_0=4^{-12/2}\frac{2^9}{3}=\frac{1}{24}$.
Similar to the cases $d=10$ and 11, the approximate solution for $\mu$ consists of values extremely close to $\frac{1}{3}\Z$, so we rounded the approximate solution to lie in $\frac{1}{3}\Z$.
This simple solution was then exactly verified, as before.
The solution is elegant enough to explicitly state $\lambda$: we have $\lambda_x = \frac{512}{3}$ for $x=0=(0,\dots,0)\in\Z_4^{12}/G_{12}$ and $x=(2,\dots,2)$; $\lambda_x = \frac{11264}{3}$ for $x=(0,0,0,0,0,0,2,2,2,2,2,2)$; and $\lambda_x=0$ for all other representatives $x\in\Z_4^{12}/G_{12}$.
\end{proof}
The provided solution with $m=4$ is the lowest value of $m$ that yields a dual bound higher than the best known upper center density.

The best known upper bound on $\delta_{12}$ is 0.05969745, rounded up \cite{cohn2022three}.
The dual bound of Cohn and Triantafillou \cite{cohn2022dual} exceeds this upper bound, proving non-sharpness for $d=12$.
Their result is thus stronger than \cref{theorem: d=12}.
This illustrates how, limited by computational power, the modular form method scales better than our discrete reduction method as dimension increases, so it can achieve better dual bounds in high dimensions.
On the other hand, the modular form method is harder to optimize in low dimensions, so the discrete reduction method is better in low dimensions: for example, the discrete reduction method is able to prove non-sharpness for $3\leq d \leq 5$, which no other method has done before.
Moreover, the discrete reduction method is more easily applied to arbitrary dimensions.

\section{General dual bounds in odd dimensions}\label{section: general dual bound odd dimension}
Inspired by the elegantly simple dual auxiliary functions $\lambda$ from the $9 \leq d \leq 12$ cases for $m=4$ and $r=2$, we identify a general dual auxiliary function for odd $d$.
For brevity of notation, to refer to an element $(0,\dots,0,1,\dots,1,2\dots,2)\in\Z_4^d$ where there are $a$ zeros, $b$ ones, and $c$ twos, we use the notation $(0^a,1^b,2^c)$, where we may omit terms, e.g., $(0^a,1^b)$, when there are no entries of that type, e.g., $c=0$.
Notice that the set of $(0^a,1^b,2^c)$ for all $a,b,c\geq0$ such that $a+b+c=d$ forms a complete set of coset representatives for $\Z_4^d/G_d$.
We continue to use the notation $0 \in \Z_m^d$ to refer to $(0,\dots,0)\in\Z_m^d$.
\begin{theorem}\label{theorem: general dual auxiliary odd dimension}
For positive integer $k \geq 2$, let $d=2k+1$.
Then $\lambda:\Z_4^d/G_d\to\R$ defined by
\begin{align*}
    \lambda_0 = \lambda_{(2^d)} &= \frac{2^{2k-1}}{k+1} \\
    \lambda_{(0^k,2^{k+1})} = \lambda_{(0^{k+1},2^k)} &= \frac{2^{2k-1}}{k+1}\cdot k \\
    \lambda_{(0^k,1,2^k)} &= \frac{2^{2k-1}}{k+1}\cdot(2k+2)
\end{align*}
and $\lambda_x = 0$ for all other representatives $x\in\Z_4^d/G_d$ yields a feasible dual auxiliary function of the radialized discrete dual Cohn-Elkies linear program in $\Z_4^d$ with $r=2$.
\end{theorem}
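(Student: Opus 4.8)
The plan is to verify directly that $\mu := \widehat{\lambda}$ (the discrete Fourier transform on $\Z_4^d$ is its own inverse on even functions, so this is the unique $\mu$ with $\lambda = \widehat{\mu}$) is a dual auxiliary function on $\Z_4^d$ with $r = 2$. Since $\lambda$, and hence $\mu$, is automatically $G_d$-invariant and the listed values of $\lambda$ are nonnegative, the substantive constraints of \cref{problem: radial discrete dual Cohn Elkies} left to check are that $\mu(0) = (r/2)^d = 1$, that $\mu$ vanishes on every $G_d$-orbit of squared norm in $\{1,2,3\}$, and that $\mu \geq 0$ on every orbit of squared norm $\geq 4$. A complete set of orbit representatives is $(0^a,1^b,2^c)$ with $a+b+c = d$, of squared norm $b+4c$; so the near orbits are precisely $(0^{d-1},1)$, $(0^{d-2},1^2)$, $(0^{d-3},1^3)$, and every other orbit must carry a nonnegative $\mu$-value.

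The computational engine is the product structure of the Fourier transform on $\Z_4^d$. On $\Z_4$, writing $e_0, e_1, e_2$ for the indicators of the $G_1$-orbits $\{0\}$, $\{\pm 1\}$, $\{2\}$, a direct computation gives $\widehat{e_0} = \tfrac12(e_0+e_1+e_2)$, $\widehat{e_1} = e_0 - e_2$, and $\widehat{e_2} = \tfrac12(e_0-e_1+e_2)$. Applying the (multiplicative) transform to the identity $\prod_{j=1}^d (X_0 e_0 + X_1 e_1 + X_2 e_2) = \sum_{a+b+c=d} X_0^aX_1^bX_2^c\,\mathbf{1}_{\Orb(0^a1^b2^c)}$ and comparing coefficients shows that $\widehat{\mathbf{1}_{\Orb(0^a1^b2^c)}}$ takes, on the orbit of $(0^p,1^q,2^s)$, the value $[X_0^aX_1^bX_2^c]\,Y_0^pY_1^qY_2^s$ where $Y_0 = \tfrac12(X_0+2X_1+X_2)$, $Y_1 = \tfrac12(X_0-X_2)$, $Y_2 = \tfrac12(X_0-2X_1+X_2)$. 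Recovering the five function values of $\lambda$ from its stated (orbit-size-rescaled) vector values by dividing by $1$, $1$, $\binom{2k+1}{k}$, $\binom{2k+1}{k}$, $2(k+1)\binom{2k+1}{k}$, and extracting the relevant coefficients, one obtains
\[ \mu_{(0^p,1^q,2^s)} = \frac{1+(-1)^q}{4(k+1)} + \frac{k\,\kappa_{k+1}(d+1;q)}{4(k+1)\binom{2k+1}{k}} + \frac{(p-s)\,\kappa_{k}(d-1;q)}{2(k+1)\binom{2k+1}{k}}, \]
where $\kappa_m(n;q) := [X^m](1+X)^{n-q}(1-X)^q$ is, up to normalization, a Krawtchouk polynomial; in particular $\mu$ is affine in $p-s$.

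Two easy observations dispose of everything except a positivity estimate. First, $\lambda$ is invariant under translation by $(2,\dots,2) \in \Z_4^d$, since that translation permutes the five orbits supporting $\lambda$ among themselves; therefore $\widehat{\lambda}$ vanishes at every $y$ with $\sum_j y_j$ odd, i.e.\ $\mu_{(0^p,1^q,2^s)} = 0$ for all odd $q$, which already handles the near orbits $(0^{d-1},1)$ and $(0^{d-3},1^3)$. Second, specializing the displayed formula to $q = 0$ and $q = 2$ collapses, using elementary binomial identities such as $\binom{2k+1}{k} = \binom{2k+1}{k+1}$, to the clean expressions $\mu_{(0^p,2^s)} = p/d$ and $\mu_{(0^p,1^2,2^s)} = s/\bigl(d(d-2)\bigr)$; the first gives both $\mu(0) = \mu_{(0^d)} = 1$ and $\mu_{(2^d)} = 0$, while the second gives $\mu_{(0^{d-2},1^2)} = 0$, completing the near-orbit check and the verification that $\mu(0) = 1$. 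It remains only to show $\mu_{(0^p,1^q,2^s)} \geq 0$ for even $q$ with $q + 4s \geq 4$.

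For the final positivity I would evaluate $\kappa_{k+1}(d+1;q)$ and $\kappa_k(d-1;q)$ at even $q = 2t$: the factorizations $(1+X)^a(1-X)^{2t} = (1+X)^{a-2t}(1-X^2)^{2t}$ and $(1+X)^a(1-X)^{2t} = (1-X)^{2t-a}(1-X^2)^{a}$ reduce each to a short binomial sum which, after cancellation, I expect to collapse to $(-1)^t$ times a positive rational multiple of $\binom{2k+1}{k}\big/\bigl((2k+1)(2k-1)\cdots(2k-q+1)\bigr)$, as verified by hand for $q \in \{0,2,4\}$, where one finds $\mu_{(0^p,1^4,2^s)} = \bigl(d(d-4)-3s\bigr)/\bigl(d(d-2)(d-4)\bigr)$. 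Substituting these values puts $\mu_{(0^p,1^q,2^s)}$ over the common denominator $d(d-2)\cdots(d-q)$ with a numerator affine in $s \in \{0,\dots,d-q\}$; since $\mu$ is affine in $p-s$ it then suffices to test the two extreme orbits $(0^{d-q},1^q)$ and $(1^q,2^{d-q})$, where nonnegativity reduces to an elementary inequality valid for all $k \geq 2$ (for $q = 4$, e.g., it is $d(d-4) \geq 3(d-4)$). Carrying this out uniformly in $q$ — i.e.\ pinning down the two Krawtchouk evaluations and the resulting numerator in closed form — is the only genuinely technical step; should a uniform closed form prove awkward for large $q$, crude bounds on $|\kappa_m(n;q)|$, which carries heavy cancellation for even $q$ and is thus of much lower order than $\binom{2k+1}{k}$, already overwhelm the $(p-s)$ term for all large $k$, leaving finitely many cases to check directly.
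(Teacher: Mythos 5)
Your setup is sound and follows essentially the same strategy as the paper: compute $\mu$ explicitly from the five nonzero values of $\lambda$, recognize the resulting coefficients as Krawtchouk polynomials, observe that $\mu$ vanishes for odd $q$ and is affine in $p-s$ for even $q$, and then verify the boundary orbits and positivity. Your bookkeeping is arguably cleaner in places (the product/generating-function formulation of the symmetrized transform, and especially the observation that invariance of $\lambda$ under translation by $(2,\dots,2)$ forces $\mu$ to vanish on all odd-$q$ orbits, which the paper instead obtains by an explicit sign-pairing argument), and your closed forms for $q=0$ and $q=2$ agree with what the paper's formula yields.

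However, there is a genuine gap: the positivity of $\mu$ on the orbits with even $q\ge 4$ --- which is the entire content of the theorem beyond small cases --- is not actually proved. You write that you ``expect'' the two Krawtchouk evaluations to collapse to a signed positive multiple of $\binom{2k+1}{k}$ divided by a falling factorial, verified only for $q\in\{0,2,4\}$, and you defer the general case to either an unspecified ``elementary inequality valid for all $k\ge2$'' or to ``crude bounds\dots for all large $k$, leaving finitely many cases.'' Neither alternative is carried out: no closed form is established for general even $q$, no threshold on $k$ is given, and the claim that the cancellation-heavy Krawtchouk term is ``of much lower order'' than $\binom{2k+1}{k}$ is asserted rather than proved. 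Note also that your proposed factorization $(1+X)^{a-2t}(1-X^2)^{2t}$ does \emph{not} collapse to a single binomial coefficient, because the exponents of $(1\pm X)$ are unequal; the clean identity is $K_b(k;2k)=[x^b](1-x^2)^k=(-1)^{b/2}\binom{k}{b/2}$, where the two exponents match, and reaching it from your $\kappa_m(n;q)$ (a coefficient of $X^k$ rather than $X^q$) requires the Krawtchouk reciprocity $\binom{n}{j}K_j(i;n)=\binom{n}{i}K_i(j;n)$. The paper closes exactly this gap by combining $K_b(k;2k)=(-1)^{b/2}\binom{k}{b/2}$ with $K_b(k;2k+1)=K_b(k;2k)$, reducing positivity over all $(p,s)$ to the inequality $\binom{2k}{b}\ge(2k+1-b)\binom{k}{b/2}$, which is then proved for all even $b$ via $\binom{2k}{b}\ge\binom{k}{b/2}^2$ together with $\binom{k}{2}\ge 2k-3$. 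You would need to supply these identities and this final inequality (or an equivalent) to complete your argument.
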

\begin{proof}
Clearly $\lambda_y \geq 0$ for all $y$.
We have $\mu = \mc{F}^T\lambda$, so
\[ \mu_x = \sum_{y\in\Z_4^d/G_d} \mc{F}^T_{xy}\lambda_y = 4^{-d/2} \sum_y \sum_{z\sim x} e^{-\frac{2\pi i}{4}\left<y,z\right>}\lambda_y = 2^{-2k-1} \sum_y \sum_{z\sim x} (-i)^{\left<y,z\right>}\lambda_y. \]
The only nonzero $\lambda_y$ terms are the five given in the theorem statement.
Let $x=(0^a,1^b,2^c)$ for $a+b+c=d=2k+1$.
The $y=0$ term yields $\frac{1}{4(k+1)}|\Orb(x)|$ and the $y=(2^d)$ term yields $(-1)^b\frac{1}{4(k+1)}|\Orb(x)|$.

For the $(0^k,2^{k+1})$ and $(0^{k+1},2^k)$ terms, if $b$ is odd then $\sum_i z_i$ is odd for all $z \sim x$, so using $(2^k,0^{k+1})$ instead of $(0^{k+1},2^k)$ as representative, i.e., reversing the indices of $(0^{k+1},2^k)$, we have
\[ \left<(0^k,2^{k+1}),z\right> + \left<(2^k,0^{k+1}),z\right> = 2 \sum_i z_i \equiv 2 \pmod{4}. \]
As $\lambda$ takes on the same value for these two representatives, one of these terms is congruent to 0 mod 4 and the other 2 mod 4, so the $(-i)^{\left<y,z\right>}$ terms are $\pm1$, which cancel, yielding a total of 0 for these two representatives.

If $b$ is even, then $(-i)^{\left<y,z\right>}=\pm1$ will have the same sign for these two representatives $(0^k,2^{k+1})$ and $(0^{k+1},2^k)$ for each $z\sim x$, specifically $(-1)^\ell$ where $\ell$ is the number of ones and threes of $z$ in the first $k$ indices.
So the contribution of these two representatives is
\begin{align*}
    \frac{1}{4(k+1)}\cdot k \sum_{z\sim x} (-i)^{\left<(0^k,2^{k+1}),z\right>} + (-i)^{\left<(2^k,0^{k+1}),z\right>}
    &= \frac{1}{4(k+1)}\cdot k \binom{a+c}{a}2^b\cdot 2 \sum_{\ell=0}^b (-1)^\ell \binom{k}{\ell}\binom{k+1}{b-\ell}
\end{align*}
if $b \leq k$, and
\begin{align*}
    \frac{k}{4(k+1)}& \sum_{z\sim x} (-i)^{\left<(0^k,2^{k+1}),z\right>} + (-i)^{\left<(2^k,0^{k+1}),z\right>}
    = \frac{k}{4(k+1)} \binom{a+c}{a}2^b\cdot 2 \sum_{\ell=b-k-1}^k (-1)^\ell \binom{k}{\ell}\binom{k+1}{b-\ell}
    \\ &= \frac{k}{4(k+1)} \binom{a+c}{a}2^b\cdot 2(-1)^k \sum_{\ell=0}^{2k+1-b} (-1)^\ell \binom{k}{\ell}\binom{k+1}{2k+1-b-\ell}
\end{align*}
if $b \geq k+1$.
After picking locations for the ones or threes of $z$, the $\binom{a+c}{a}$ term corresponds to picking which of the remaining locations will be zeros and the $2^b$ term corresponds to picking whether each of the $b$ indices are a one or a three.
The final sums are \emph{Krawtchouk polynomials} (with $q=2$) given by
\[ K_j(i;n) = \sum_{\ell=0}^j (-1)^\ell \binom{i}{\ell}\binom{n-i}{j-\ell}, \]
where $i,j$ are integers between 0 and $n$.
One fundamental property of the Krawtchouk polynomials (easily derived from \cite[p. 152]{macwilliams1977theory}) is that $K_{n-j}(i;n) = (-1)^i K_j(i;n)$, which allows us to simply our expression for the contribution of these two representatives to
\[ \frac{k}{4(k+1)}\binom{a+c}{a}2^{b} \cdot 2 K_b(k;2k+1), \]
for all even $b$.
Combining this with the odd $b$ case, which vanishes, yields
\[ (1+(-1)^b) \frac{k}{4(k+1)} \binom{a+c}{a}2^{b} K_b(k;2k+1). \]

The last term is the $y=(0^k,1,2^k)$ term.
If $z_{k+1}\in\{1,3\}$, negating this index negates $(-i)^{\left<y,z\right>}$ while remaining within $\Orb(x)$, so these terms cancel.
Hence, we assume either a 0 or 2 is at index $k+1$, where the 1 of $y$ is situated.
By similar reasoning to the previous representatives, the cases where $z_{k+1}=0$ collectively yield a coefficient of $K_b(k;2k)$ for $\lambda_y$ times $\binom{a+c-1}{c}2^b$ for locations of zeros vs. twos and signs of ones vs. threes, so in aggregate $\binom{a+c-1}{c}2^b K_b(k;2k)$.
Similarly, the cases where $z_{k+1}=2$ collectively yield $-\binom{a+c-1}{a}2^b K_b(k;2k)$.
As $\lambda_y = \frac{2^{2k-1}}{k+1}(2k+2)$, combining all five representatives' contributions yields
\begin{align*}
    \mu_x = \frac{2^b}{4(k+1)}\Bigg[&(1+(-1)^b)\binom{a+c}{a}\left(\binom{2k+1}{b}+kK_b(k;2k+1)\right) 
    \\ &+ (2k+2)\left(\binom{a+c-1}{c}-\binom{a+c-1}{a}\right)K_b(k;2k) \Bigg],
\end{align*}
where $|\Orb(x)|=\binom{2k+1}{b}\binom{a+c}{a}2^b$.
Another fundamental property of the Krawtchouk polynomials (see \cite[p. 152]{macwilliams1977theory}) is that $K_j(n-i;n)=(-1)^j K_j(i;n)$, which implies $K_b(k;2k)=0$ for odd $b$, and thus $\mu_x = 0$ when $b$ is odd.

As $K_0(i;n)=1$, we find that $\mu_0 = 1 = (r/2)^d$, as desired.
We also need $\mu_x = 0$ for all $0 < |x| < r=2$.
The only such $x$ are those with $c=0$ and $1 \leq b \leq 3$.
When $b=1$ or 3, our previous observation implies $\mu_x = 0$.
When $b=2$, using the fact that $K_1(i;n) = n-2i$ and $K_2(i;n) = \binom{n}{2}-2ni+2i^2$, we find $\mu_{(0^{2k-1},1^2)}=0$.

For general $x=(0^a,1^b,2^c)$, we must show $\mu_x \geq 0$.
We may assume $b$ is even as otherwise $\mu_x=0$.
As
\[ \binom{a+c-1}{c}-\binom{a+c-1}{a} = \binom{a+c}{a}\frac{a-c}{a+c}, \]
we will equivalently show
\[ \binom{2k+1}{b}+kK_b(k;2k+1) + (k+1)\frac{a-c}{a+c}K_b(k;2k) \geq 0. \]
To show this for all $a,c$, where $-1\leq \frac{a-c}{a+c}\leq 1$, this is equivalent to showing
\[ (k+1)|K_b(k;2k)| \leq \binom{2k+1}{b} + kK_b(k;2k+1). \]
For $b=0$, this yields $k+1 \leq k+1$, so we now consider $b \geq 2$.
As $K_b(k;2k+1)=K_b(k;2k)+K_{b-1}(k;2k) = K_b(k;2k)$, where the first equality follows from \cite[Lemma 2.6]{li2022unique}, we must show that
\[ \binom{2k+1}{b} \geq (k+1)|K_b(k;2k)|-kK_b(k;2k). \]
We will prove the stronger statement
\[ \binom{2k+1}{b} \geq (2k+1)|K_b(k;2k)| \iff \binom{2k}{b} \geq (2k+1-b)|K_b(k;2k)|. \]
Notice that $K_b(k;2k) = \sum_{\ell=0}^b (-1)^\ell \binom{k}{\ell}\binom{k}{b-\ell}$ is the coefficient of $x^b$ in $(1-x)^k(1+x)^k$; as $(1-x)^k(1+x)^k = (1-x^2)^k$, we find this implies
\[ K_b(k;2k) = (-1)^{b/2} \binom{k}{b/2}. \]
Thus it suffices to prove
\[ \binom{2k}{b} \geq (2k+1-b)\binom{k}{b/2} \]
for even $2 \leq b \leq 2k$.
We directly find $b=2$ and $b=2k$ yield sharp inequalities, and for $b=2k-2$ the inequality is equivalent to $k \geq 2$, so it holds.
For $4 \leq b \leq 2k-4$, notice that $\frac{k(k-1)}{2}\geq 2k-3$ holds for all integers $k \geq 2$, from which we have
\[ \binom{2k}{b} = \sum_{\ell=0}^b \binom{k}{\ell}\binom{k}{b-\ell} \geq \binom{k}{b/2}^2 \geq \binom{k}{2}\binom{k}{b/2} \geq (2k-3)\binom{k}{b/2} \geq (2k+1-b)\binom{k}{b/2}. \]
This completes the proof.
\end{proof}
\begin{corollary}\label{corollary: general dual bound odd dimension}
For odd $d \geq 5$, the $d$-dimensional Cohn-Elkies linear programming bound is greater than or equal to $\frac{1}{2(d+1)}$.
\end{corollary}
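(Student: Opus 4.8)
The plan is to deduce this as an immediate specialization of \cref{theorem: general dual auxiliary odd dimension}. Write $d = 2k+1$; the odd integers $d \geq 5$ correspond exactly to integers $k \geq 2$, which is precisely the hypothesis of \cref{theorem: general dual auxiliary odd dimension}. That theorem produces, for each such $d$, an explicit $G_d$-invariant feasible point $\lambda$ (equivalently $\mu = \mc{F}^T\lambda$) of \cref{problem: radial discrete dual Cohn Elkies} in $\Z_4^d$ with $r = 2$. Since $r^2 = 4 \in \N$ and $1 \leq r = 2 = m/2$ with $m = 4$, we have $m \geq 2r$, so \cref{theorem: discrete reduction}, and hence \cref{corollary: discrete reduction}, applies.

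Next I would compute the objective value of this dual auxiliary function and simplify. We have $\lambda_0 = \frac{2^{2k-1}}{k+1}$ and $m^{-d/2} = 4^{-(2k+1)/2} = 2^{-(2k+1)}$, so the objective value is $m^{-d/2}\lambda_0 = 2^{-(2k+1)}\cdot\frac{2^{2k-1}}{k+1} = \frac{1}{4(k+1)} = \frac{1}{2(d+1)}$, using $d + 1 = 2(k+1)$. This is the only computation involved, and it is elementary exponent bookkeeping.

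Finally, I would invoke the dual bound mechanism from \cref{section: discrete dual LP}: the chain of inequalities following \cref{problem: discrete dual Cohn Elkies} (equivalently, weak duality for \cref{problem: discrete Cohn Elkies} together with \cref{corollary: discrete reduction}) shows that the objective value of any feasible point of the discrete dual Cohn-Elkies linear program in $\Z_4^d$ is a lower bound for the Cohn-Elkies linear programming bound in $\R^d$. Applying this to the feasible point from \cref{theorem: general dual auxiliary odd dimension} yields the claimed bound $\frac{1}{2(d+1)}$. I expect essentially no obstacle here beyond the routine arithmetic above: all the substantive work—nonnegativity of $\lambda$, nonnegativity of $\mu_x$ for $|x| \geq r$, the normalization $\mu_0 = (r/2)^d$, and the vanishing $\mu_x = 0$ for $0 < |x| < 2$—was already carried out in the proof of \cref{theorem: general dual auxiliary odd dimension}, so the corollary merely records the resulting numerical bound.
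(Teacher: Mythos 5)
Your proposal is correct and matches the paper's proof, which likewise deduces the corollary directly from \cref{theorem: general dual auxiliary odd dimension} by computing $m^{-d/2}\lambda_0 = 2^{-(2k+1)}\cdot\frac{2^{2k-1}}{k+1} = \frac{1}{4(k+1)} = \frac{1}{2(d+1)}$ and invoking the dual bound mechanism. The only difference is that you explicitly record the check $m = 4 \geq 2r = 4$, which the paper leaves implicit.
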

\begin{proof}
This follows directly from \cref{theorem: general dual auxiliary odd dimension}, where the objective function is
\[ m^{-d/2}\lambda_0 = 2^{-2k-1} \frac{2^{2k-1}}{k+1} = \frac{1}{4(k+1)}=\frac{1}{2(d+1)}, \]
where $d=2k+1$.
\end{proof}
This generalizes the dual bounds found in dimensions 9 and 11 in \cref{theorem: d=9,theorem: d=11} to all odd dimensions $d \geq 5$.
Unfortunately, this general dual bound decays faster than the upper center density of the best known sphere packings in each dimension, so the only new dimension for which it exceeds the best known packing is $d=13$.
For $d=13$, the best known sphere packing \cite{leech1971sphere} has upper center density $3^2\cdot2^{-8}=\frac{9}{256}=0.03515625$.
\begin{corollary}\label{corollary: d=13}
The 13-dimensional Cohn-Elkies linear programming bound is greater than or equal to $\frac{1}{28}> 0.03571428$.
\end{corollary}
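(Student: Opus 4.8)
The plan is to simply invoke \cref{corollary: general dual bound odd dimension} with the specific value $d = 13$. Since $13$ is odd and $13 \geq 5$, that corollary applies and yields that the $13$-dimensional Cohn-Elkies linear programming bound is at least $\frac{1}{2(d+1)} = \frac{1}{2 \cdot 14} = \frac{1}{28}$. Concretely, this corresponds to the feasible dual auxiliary function of \cref{theorem: general dual auxiliary odd dimension} in $\Z_4^{13}/G_{13}$ with $r = 2$ and $k = 6$, whose objective value $m^{-d/2}\lambda_0 = 2^{-13}\cdot\frac{2^{11}}{7} = \frac{1}{28}$.

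It then remains only to compare $\frac{1}{28}$ against the decimal threshold in the statement: $\frac{1}{28} = 0.0357142857\ldots$, so indeed $\frac{1}{28} > 0.03571428$, and in particular this exceeds the upper center density $\frac{9}{256} = 0.03515625$ of the best known $13$-dimensional sphere packing. There is no real obstacle here — the corollary is an immediate specialization of the general odd-dimensional construction already established, and the only ``computation'' is the elementary numerical inequality $\frac{1}{28} > 0.03571428$.
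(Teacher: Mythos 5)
Your proposal is correct and matches the paper's (implicit) argument exactly: \cref{corollary: d=13} is an immediate specialization of \cref{corollary: general dual bound odd dimension} to $d=13$ ($k=6$), and your arithmetic $4^{-13/2}\cdot\frac{2^{11}}{7}=\frac{1}{28}>0.03571428$ checks out.
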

The best known upper bound on $\delta_{13}$ is 0.06609354, rounded up \cite{cohn2022three}.

\section{Conclusion and open problems}\label{section: conclusion}
As the discrete Cohn-Elkies linear program, as well as its dual, both optimize over real-valued functions on $\Z_m^d$, larger values of $d$ become increasingly hard to optimize.
However, the $G_d$-invariance slightly mollifies these effects, making the radialized discrete dual noticeably more computationally tractable.
In particular, the maximum orbit size of $G_d$ is $2^d d!$, achieved by $x\in \Z_m^d$ if $x_1,\dots,x_d$ and their negatives all have distinct values.
This increases in $d$, and moreover, as $m$ increases, we expect orbits to be larger due to the lower likelihood of overlaps in the values of $\pm x_1, \dots, \pm x_d$.
Thus the radialized discrete dual optimizes over real variables indexed by $\Z_m^d/G_d$, where
\[ \frac{m^d}{2^d d!} \leq |\Z_m^d/G_d| \leq m^d, \]
and by our reasoning we expect $|\Z_m^d/G_d|$ to tend towards its lower bound as $m$ increases.
In fact, by choosing representatives $x\in\Z_m^d/G_d$ of the form $0 \leq x_1 \leq \cdots \leq x_d \leq \floor{\frac{m}{2}}$, we find $|\Z_m^d/G_d|$ is the number of ways to partition $d$ elements into $\floor{\frac{m}{2}}+1$ distinguishable sets, which yields
\[ |\Z_m^d/G_d| = \binom{d+\floor{\frac{m}{2}}}{d} \]
where as $m$ increases we see this tends towards $\frac{m^d}{2^d d!}$, i.e., $|\Z_m^d/G_d|\sim \frac{m^d}{2^d d!}$ as a function of $m$.
In addition, the smallest value of $m$ needed to beat the best known packing empirically decreases with $d$: for $d\in\{3,4,5,6,7,9,10,11,12,13\}$, these values are 21, 16, 10, 8, 8, 4, 4, 4, 4, and 4, respectively.
Thus, the radialized discrete dual Cohn-Elkies linear program is more tractable than one may na\"ively expect.

While our discussion in \cref{section: Discrete LP} provides an intuitive argument as to why the discrete Cohn-Elkies linear programming bound should converge to the Cohn-Elkies linear programming bound as $m,r\to\infty$, a rigorous proof of this is left as an open problem.
It would also be of interest to know the corresponding rate of convergence.

One potential avenue for further research is lifting the discrete dual solutions from $\Z_m^d$ back to dual solutions on $\R^d$.
Recall that the discrete dual solutions for this paper are available as ancillary files on the arXiv.org e-print archive.
As the discrete reduction incorporates some loss into this process of creating dual bounds, understanding the preimages of discrete dual solutions may enable a more directed search for dual solutions of the Cohn-Elkies linear program.
Directly working with the dual in $\R^d$ avoids this loss issue inherent to the discretization and has the potential of creating better dual bounds.
However, the dual has previously been essentially intractable; perhaps the preimages of discrete dual solutions may motivate an optimization over certain classes of objects in the dual feasible space that yield near-optimal dual bounds. 

\section*{Acknowledgments}
We sincerely thank Henry Cohn for his instrumental mentorship and support throughout the research process as well as editing feedback.
This research was done, in part, at the University of Minnesota Duluth REU in 2022 with support from Jane Street Capital, for which we thank Prof.\@ Joe Gallian for providing this wonderful opportunity.

\bibliographystyle{amsinit}
\bibliography{ref}

\end{document}